\DeclareMathOperator{\Ad}{Ad}
\DeclareMathOperator{\Aut}{Aut}
\DeclareMathOperator{\Id}{Id}
\newcommand{\op}{\operatorname}
\DeclareMathOperator{\Span}{span}
\newcommand{\fr}{\mathfrak}
\newcommand{\al}{\alpha}
\newcommand{\bb}{\mathbb}
\DeclareMathOperator{\SO}{SO}
\DeclareMathOperator{\OO}{O}
\DeclareMathOperator{\Sp}{Sp}
\DeclareMathOperator{\U}{U}
 \newtheorem{lemma} {Lemma} [section]
\newtheorem{theorem}[lemma]{Theorem} 
\newtheorem{remark}[lemma] {Remark} 
\newtheorem{prop} [lemma]{Proposition}  
\newtheorem{definition}[lemma] {Definition} 
\newtheorem{corol}[lemma] {Corollary}
\begin{document}

\title[Geodesic orbit metrics in a class of homogeneous bundles]{Geodesic orbit metrics in a class of homogeneous bundles over real and complex Stiefel manifolds}

\author{Andreas Arvanitoyeorgos$^\ast$, Nikolaos Panagiotis Souris and Marina Statha}
\address{University of Patras, Department of Mathematics, GR-26500 Rion, Greece}
\email{arvanito@math.upatras.gr}
%\email{sakane@math.sci.osaka-u.ac.jp}
\address{University of Patras, Department of Mathematics, GR-26500 Rion, Greece}
\email{nsouris@upatras.gr  }
\address{University of Patras, Department of Mathematics, GR-26500 Rion and
University of Thessaly,  Department of Mathematics, GR-35100 Lamia, Greece}
%\ead{arvanito@math.upatras.gr}
\email{statha@math.upatras.gr}

\begin{abstract}
Geodesic orbit spaces (or g.o. spaces) are defined as those homogeneous Riemannian spaces $(M=G/H,g)$ whose geodesics are orbits of one-parameter subgroups of $G$.  The corresponding metric $g$ is called a geodesic orbit metric.  We study the geodesic orbit spaces of the form $(G/H,g)$, such that $G$ is one of the compact classical Lie groups $\SO(n)$, $U(n)$, and $H$ is a diagonally embedded product $H_1\times \cdots \times H_s$, where $H_j$ is of the same type as $G$. This class includes spheres, Stiefel manifolds, Grassmann manifolds and real flag manifolds.  The present work is a contribution to the study of g.o. spaces $(G/H,g)$ with $H$ semisimple.
    
\medskip
\noindent 2020 {\it Mathematics Subject Classification.} Primary 53C25; Secondary  53C30.

\smallskip
\noindent {\it Keywords}: geodesic orbit space; geodesic orbit metric; Stiefel manifold; real flag manifold; Grassmann manifold
\end{abstract}

\maketitle

\section{Introduction}

Geodesic orbit spaces $(M=G/H,g)$ are defined by the simple property that any geodesic $\gamma$ has the form 
\begin{equation*}\gamma(t)=\exp(tX)\cdot o,\end{equation*}
 where $\exp$ is the exponential map on $G$, $o=\gamma(0)$ is a point in $M$ and $\cdot$ denotes the action of $G$ on $M$.  These spaces were initially considered in \cite{Ko-Va}, and up to this day they have been extensively studied within various geometric contexts, including the Riemannian (\cite{GoNi}), pseudo-Riemannian (\cite{CaZa}), Finsler (\cite{YaDe}) and affine (\cite{Du}) context.  The classification of g.o. spaces remains an open problem, whereas several partial classifications have been obtained (\cite{Al-Ar}, \cite{Al-Ni}, \cite{CheCheWo}, \cite{CheNi}, \cite{CheNiNi}, \cite{Go}, \cite{So2}, \cite{Ta} to name a few).

There are diverse examples of g.o. spaces, including the classes of symmetric spaces, weakly symmetric spaces (\cite{Ber-Ko-Va}, \cite{Wo2}), isotropy irreducible spaces (\cite{Wo1}), $\delta$-homogeneous spaces (\cite{Be-Ni-1}) and Clifford-Wolf homogeneous spaces (\cite{Be-Ni-2}).  The most important subclass of g.o. spaces are the \emph{naturally reductive spaces}, whose complete description is also open (see the recent low-dimensional classifications \cite{Ag}, \cite{St}).  Another related topic of recent interest is the study of Einstein metrics that are not g.o. metrics (\cite{CheCheDe}, \cite{Ni3}). 
 For a review about g.o. spaces we refer to \cite{Ar2} and in the introduction of the article \cite{Ni2}. We also point out the recently published book \cite{Be-Ni-3}.

%Apart from their mathematical importance, g.o. spaces have applications in fields such as computational anatomy (\cite{}) and deep learning (\cite{}), due to the simple computational implementation of their geodesics as matrix exponentials.

Determining the g.o. metrics among the $G$-invariant metrics on a space $G/H$ presents some challenges.  The main challenge lies in the fact that the space of $G$-invariant metrics may have complicated structure, depending on whether the \emph{isotropy representation} of $H$ on the tangent space $T_{o}(G/H)$ contains pairwise equivalent submodules. To remedy this obstruction, various simplification results for g.o. metrics have been established (e.g. \cite{Ni2}, \cite{So1}).  A general observation is that the existence and the form of the g.o. metrics on $G/H$ depends to a large extent on the structure of the tangent space $T_{o}(G/H)$ induced from the isotropy representation and on the Lie algebraic relations between the corresponding submodules (e.g. \cite{CheNiNi}).

 When $G$ is compact semisimple, the classification of the g.o. spaces $(G/H,g)$ with $H$ abelian and $H$ simple has been obtained in the works \cite{So2} and \cite{CheNiNi} respectively. On the other hand, the classification of compact g.o. spaces $(G/H,g)$ with $H$ semisimple remains open, while no general results are known for this case.  As a first step towards this direction, in this paper we study the g.o. metrics on a general family of spaces $G/H$ with $H$ semisimple, such that the isotropy representation of all of its members has a similar description.

 In particular, we consider spaces $G/H$ where $G$ is a compact classical Lie group and $H$ is a diagonally embedded product of Lie groups of the same type as $G$.  More specifically, we study the spaces $\SO(n)/\SO(n_1)\times \cdots \times \SO(n_s)$ and $\U(n)/\U(n_1)\times \cdots \times \U(n_s)$ with $0<n_1+\cdots +n_s\leq n$.  This class properly includes the spheres $\SO(n)/\SO(n-1)$ and $\U(n)/\U(n-1)$, the Stiefel manifolds $\SO(n)/\SO(n-k)$ and $\U(n)/\U(n-k)$, the Grassmann manifolds $\SO(n)/\SO(k)\times \SO(n-k)$, $\U(n)/\U(k)\times \U(n-k)$ as well as the real flag manifolds $\SO(n)/\SO(n_1)\times \cdots \times \SO(n_s)$ and $\U(n)/\U(n_1)\times \cdots \times \U(n_s)$ with $n_1+\cdots +n_s=n$. If $n_1+\cdots +n_s< n$, each of these spaces can be viewed as a total space over a Stiefel manifold, with the fiber being a real flag manifold, e.g.
\begin{equation*}
 \SO(m)/\SO(n_1)\times \cdots \times \SO(n_s)\rightarrow \SO(n)/\SO(n_1)\times \cdots \times \SO(n_s)\rightarrow \SO(n)/\SO(m),
\end{equation*}
 with $m=n_1+\cdots +n_s$. The first main result is the following.

\begin{theorem}\label{main1}
Let $G/H$ be the space $\SO(n)/\SO(n_1)\times \cdots \times \SO(n_s)$, where $0<n_1+\cdots +n_s\leq n$, and $n_j>1, j=1,\dots , s$.  A $G$-invariant Riemannian metric on $G/H$ is geodesic orbit if and only if it is a normal metric, i.e. it is induced from an $\op{Ad}$-invariant inner product on the Lie algebra $\fr{so}(n)$ of $\SO(n)$.
\end{theorem}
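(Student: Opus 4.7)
The \emph{if} direction is standard: every normal metric on a compact reductive homogeneous space is naturally reductive, hence geodesic orbit. For the converse, the plan is to analyze the isotropy-module structure of $\mathfrak m=\mathfrak{so}(n)/\mathfrak h$, write down the general $\Ad(H)$-invariant inner product (equivalently, the metric endomorphism $\Lambda$ on $\mathfrak m$), and then apply the Kowalski--Vanhecke geodesic lemma to a small number of test vectors so as to collapse $\Lambda$ to a scalar multiple of the identity. Set $m=n_1+\cdots+n_s$ and partition $\{1,\ldots,n\}$ into blocks $I_0,I_1,\ldots,I_s$ of sizes $n-m,n_1,\ldots,n_s$ ($I_0$ empty if $m=n$). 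Let $\mathfrak{so}(n)_{ij}$ ($i\le j$) denote the skew matrices supported in the off-diagonal pair $(I_i,I_j),(I_j,I_i)$, so that $\mathfrak h=\bigoplus_{j\ge 1}\mathfrak{so}(n)_{jj}$ and, with $\mathfrak m_{ij}:=\mathfrak{so}(n)_{ij}$,
\[
\mathfrak m \;=\; \mathfrak m_{00}\;\oplus\;\bigoplus_{j=1}^{s}\mathfrak m_{0j}\;\oplus\;\bigoplus_{1\le i<j\le s}\mathfrak m_{ij}.
\]
Under $H$, the module $\mathfrak m_{ij}$ with $1\le i<j\le s$ is the irreducible $\R^{n_i}\otimes\R^{n_j}$, each $\mathfrak m_{0j}$ is $(n-m)$ copies of the standard $\SO(n_j)$-representation, and $\mathfrak m_{00}\cong\mathfrak{so}(n-m)$ is trivial. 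Using $n_j>1$, these isotypes are pairwise distinct. By Schur, $\Lambda$ preserves the decomposition, acts on $\mathfrak m_{ij}$ by a scalar $\lambda_{ij}>0$, acts on $\mathfrak m_{0j}\cong\R^{n-m}\otimes\R^{n_j}$ as $A^{(j)}\otimes\Id$ for some positive-definite $(n-m)\times(n-m)$ matrix $A^{(j)}$, and restricts to an arbitrary positive-definite endomorphism $\Lambda_{00}$ of $\mathfrak m_{00}$.

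Next, apply the criterion ``for each $X\in\mathfrak m$ there is $Z\in\mathfrak h$ with $[Z+X,\Lambda X]\in\mathfrak h$'' to three families of test vectors. For $X=X_{0j}$ with rank-one data $a=ev^T$, a direct bracket computation gives $[X_{0j},\Lambda X_{0j}]_{\mathfrak m_{00}}=A^{(j)}aa^T-aa^TA^{(j)}$; since $[Z,X_{0j}]\in\mathfrak m_{0j}$ contributes nothing in $\mathfrak m_{00}$, this must vanish, forcing $A^{(j)}$ to commute with every rank-one projection $ee^T$, hence $A^{(j)}=\mu_j\Id$. For $X=C+X_{0j}$ with $C\in\mathfrak m_{00}$, the $\mathfrak m_{0j}$-component of the identity becomes $(\mu_j c-\Lambda_{00}(c))a=\mu_j az$ for some $z\in\mathfrak{so}(n_j)$; specializing to rank-one $a=ev^T$, the skew-symmetry of $z$ (hence $\langle v,zv\rangle=0$) prevents $\mu_j c-\Lambda_{00}(c)$ from admitting a non-zero eigenvalue on $e$, so $\Lambda_{00}(c)=\mu_j c$ for every $c\in\mathfrak{so}(n-m)$. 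In particular $\Lambda_{00}=\mu\Id$ and the $\mu_j$ all coincide. For $X=X_{0i}+X_{ij}$ with $i\ne j$ and $i,j\ge 1$, the bracket $[X_{0i},X_{ij}]\in\mathfrak m_{0j}$ is unreachable by $[Z,\Lambda X]\in\mathfrak m_{0i}\oplus\mathfrak m_{ij}$, so $(\lambda_{ij}-\mu)[X_{0i},X_{ij}]=0$ and $\lambda_{ij}=\mu$. In the pure flag case $n=m$ (no $\mathfrak m_{0j}$), one instead takes $X=X_{ij}+X_{jk}$ with $s\ge 3$ and uses $[X_{ij},X_{jk}]\in\mathfrak m_{ik}$ to obtain $\lambda_{ij}=\lambda_{jk}$; the low-$s$ subcases are Grassmannians and spheres, symmetric spaces with a one-dimensional family of invariant metrics, all normal.

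The essential step is extracting the scalarity $A^{(j)}=\mu_j\Id$: $\Ad(H)$-invariance alone leaves the full cone of positive-definite $A^{(j)}$ available, and it is only the absence of an $\SO(n-m)$-factor in $H$, combined with the bracket identity $[\mathfrak h,\mathfrak m_{0j}]\subset\mathfrak m_{0j}$, that prevents the $\mathfrak m_{00}$-defect $A^{(j)}aa^T-aa^TA^{(j)}$ from being absorbed by the geodesic-lemma element $Z$. Once this rigidity is isolated, the remaining equalities $\Lambda_{00}=\mu\Id$ and $\lambda_{ij}=\mu$ follow cleanly from bracket computations in the explicit block-matrix presentation of the modules, so that $\Lambda=\mu\,\Id_{\mathfrak m}$ is precisely the restriction to $\mathfrak m$ of $\mu(-B)$ on $\mathfrak{so}(n)$, with $B$ the Killing form; that is, the metric is normal.
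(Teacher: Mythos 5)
Your overall strategy (parametrize the $\op{Ad}(H)$-invariant metric endomorphism by Schur's lemma, then use the geodesic-orbit criterion on test vectors to force scalarity) is sound in spirit, but the Schur step is carried out incorrectly precisely where the hypothesis $n_j>1$ is weakest, namely at $\SO(2)$-factors, and this leaves a genuine gap.

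Concretely, two of your structural claims fail when some $n_j=2$.
First, the module $\fr{m}_{ij}\cong\R^{n_i}\otimes\R^{n_j}$ with $1\le i<j\le s$ is \emph{not} irreducible when $n_i=n_j=2$: it splits into two inequivalent two-dimensional submodules (the self-dual and anti-self-dual parts, just as for the tangent space of $\SO(4)/\SO(2)\times\SO(2)\cong S^2\times S^2$). The paper records this in its Remark about $\SO(4)$, and the endomorphism on such an $\fr{m}_{ij}$ is then governed by two scalars $\lambda_{ij}^1,\lambda_{ij}^2$, not one. Second, your parametrization $\Lambda|_{\fr{m}_{0j}}=A^{(j)}\otimes\op{Id}$ with $A^{(j)}$ a symmetric positive-definite $n_0\times n_0$ matrix silently assumes the real commutant of the standard $\SO(n_j)$-representation is $\R$. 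For $n_j>2$ this is correct, but for $n_j=2$ the commutant of $\SO(2)$ acting on $\R^2$ is $\C$, so the invariant endomorphisms of $\fr{m}_{0j}\cong\R^{n_0}\otimes\R^2$ are of the form $S\otimes\op{Id}+T\otimes J$ with $S$ symmetric, $T$ skew-symmetric, and $J$ the complex structure on $\R^2$. Your rank-one test vectors were aimed only at the $S$-parameters, so the argument as written does not even see the $T$-parameters, let alone collapse them. Because the space of $G$-invariant metrics you start from is strictly smaller than the true one whenever an $\SO(2)$-factor is present, the conclusion $\Lambda=\mu\op{Id}$ is not proved for those cases. (Your aside that in low $s$ one is reduced to symmetric spaces with ``a one-dimensional family of invariant metrics'' is also off for $\SO(4)/\SO(2)\times\SO(2)$, which carries a two-parameter family; the theorem still holds there because $\fr{so}(4)$ is not simple, but that case must be argued, not waved away.)

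It is worth noting, for comparison, that the paper sidesteps both difficulties by first invoking the normalizer lemma (Lemma \ref{NormalizerLemma}): a g.o.\ metric must in fact be invariant under $\op{Ad}(N_G(H))$, and since $\fr{n}_{\fr{g}}(\fr{h})=\fr{so}(n_0)\oplus\fr{h}$, all the matrix-valued freedom on $\fr{m}_{0j}$ (your $A^{(j)}$ and the hidden $T^{(j)}$) is annihilated at one stroke, leaving only scalars plus the residual $V^1_{ij}\oplus V^2_{ij}$ splittings, which are then handled by a case analysis together with a cited classification for the abelian-isotropy case $n_1=\dots=n_s=2$. If you want to keep your more elementary bracket-only approach, you must (a) replace ``irreducible'' by the correct two-summand decomposition of $\fr{m}_{ij}$ whenever $n_i=n_j=2$ and show the resulting pair of scalars must coincide, and (b) enlarge your ansatz on $\fr{m}_{0j}$ to include the $J$-twisted part when $n_j=2$ and show your test vectors kill it as well. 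Until those are supplied, the proof is incomplete exactly on the boundary $n_j=2$ of the allowed range $n_j>1$.
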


\begin{remark} {\rm We remark that if $n\neq 4$ then $\fr{so}(n)$ is simple, and thus any $\op{Ad}$-invariant inner product is homothetic to the negative of the Killing form $B(X,Y)=(n-2)\op{Trace}(XY)$. If $n=4$ then $\fr{so}(n)\equiv\fr{so}(3)\oplus \fr{so}(3)$, and thus any $\op{Ad}$-invariant inner product is homothetic to the negative of the one-parameter family $B_1+\lambda B_2$, $\lambda>0$, where $B_1$ denotes the Killing form of the first simple factor $\fr{so}(3)$ and $B_2$ denotes the Killing form of the second simple factor $\fr{so}(3)$.}
\end{remark}

As a result of Theorem \ref{main1} and Proposition \ref{UnivCover}, we obtain the following.

\begin{corol}\label{corol_o(n)}
Let $G/H$ be one of the spaces $\OO(n)/\OO(n_1)\times \cdots \times \OO(n_s)$ or $\SO(n)/\op{S}(\OO(n_1)\times \cdots \times \OO(n_s))$, where $0<n_1+\cdots +n_s\leq n$, $n_j>1$.  A $G$-invariant Riemannian metric on $G/H$ is geodesic orbit if and only if it is normal.
\end{corol}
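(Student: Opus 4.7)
The plan is to transfer Theorem \ref{main1} across natural finite $\SO(n)$-equivariant coverings relating $\SO(n)/\SO(n_1)\times \cdots \times \SO(n_s)$ to the two spaces in the corollary. Setting
\[
H_1=\SO(n_1)\times \cdots \times \SO(n_s),\quad H_2=\op{S}(\OO(n_1)\times \cdots \times \OO(n_s)),\quad H_3=\OO(n_1)\times \cdots \times \OO(n_s),
\]
I first observe that, because $n_j>1$, the three subgroups share the common Lie algebra $\fr{so}(n_1)\oplus \cdots \oplus \fr{so}(n_s)$, and $H_1$ is the identity component of both $H_2$ and $H_3$. Moreover, each coset in $\OO(n)/H_3$ meets $\SO(n)$, since $H_3$ contains elements of determinant $-1$; this yields a diffeomorphism $\OO(n)/H_3\cong \SO(n)/H_2$ compatible with the restricted $\SO(n)$-action. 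Consequently, both target spaces are covered by $\SO(n)/H_1$ through the same finite $\SO(n)$-equivariant projection $\pi\colon \SO(n)/H_1 \to \SO(n)/H_2$.

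Given a $G$-invariant metric $g$ on either target space, the pullback $\pi^{*}g$ is an $\SO(n)$-invariant Riemannian metric on $\SO(n)/H_1$. I would then invoke Proposition \ref{UnivCover}, which I expect to state that the geodesic orbit property is preserved in both directions under finite equivariant coverings; this gives that $g$ is g.o.\ if and only if $\pi^{*}g$ is g.o. Theorem \ref{main1} then forces $\pi^{*}g$ to be normal, i.e.\ induced by an $\op{Ad}(\SO(n))$-invariant inner product $B$ on $\fr{so}(n)$. Because normality is defined purely at the Lie algebra level, the same $B$ induces a normal metric on $\SO(n)/H_2\cong \OO(n)/H_3$ whose pullback under $\pi$ equals $\pi^{*}g$, and therefore coincides with $g$.

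The only remaining verification, in the $\OO(n)/H_3$ case, is that $B$ is in fact $\op{Ad}(\OO(n))$-invariant and not merely $\op{Ad}(\SO(n))$-invariant. For $n\neq 4$ this is automatic, since $\fr{so}(n)$ is simple, $B$ must then be proportional to the Killing form, and the Killing form is preserved by every Lie algebra automorphism and in particular by $\op{Ad}(\OO(n))$. For $n=4$, the decomposition $\fr{so}(4)=\fr{so}(3)\oplus \fr{so}(3)$ is swapped by conjugation under any orientation-reversing element of $\OO(4)$, so the initial $\OO(4)$-invariance of $g$ forces the parameter $\lambda$ in the family $B_1+\lambda B_2$ to equal $1$, which makes $B$ automatically $\op{Ad}(\OO(4))$-invariant. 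The reverse implication (normal $\Rightarrow$ g.o.) is standard on any compact homogeneous space. The main obstacle in the whole argument is the bidirectional transfer of the g.o.\ property through the finite covering $\pi$, which is precisely the content that Proposition \ref{UnivCover} is expected to supply.
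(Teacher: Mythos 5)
Your argument follows essentially the same route as the paper, which derives the corollary directly from Theorem~\ref{main1} together with Proposition~\ref{UnivCover}. The mechanism is the one you identify: a $G$-invariant metric on either space corresponds to an $\op{ad}(\fr{h})$-equivariant endomorphism of $\fr{m}$, which therefore defines an $\SO(n)$-invariant metric on $\SO(n)/\SO(n_1)\times\cdots\times\SO(n_s)$ (using connectedness of $\widetilde H$), the g.o.\ condition of Proposition~\ref{GOCond} is purely a Lie-algebra identity so is preserved, and then Theorem~\ref{main1} forces that endomorphism to be a multiple of the identity relative to a bi-invariant $B$.

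Two small points of comparison. First, you do not actually need the diffeomorphism $\OO(n)/H_3\cong\SO(n)/H_2$ or an intermediate covering $\pi$: Proposition~\ref{UnivCover} already applies with $\widetilde G/\widetilde H=\SO(n)/\SO(n_1)\times\cdots\times\SO(n_s)$ and $G/H=\OO(n)/\OO(n_1)\times\cdots\times\OO(n_s)$, since it only requires equality of Lie algebras (which holds because $n_j>1$), not containment of groups. Your reading of Proposition~\ref{UnivCover} is also slightly off in wording — it is stated as transferring the property ``every g.o.\ metric is normal,'' and its converse requires $H$ connected, so it is not literally a bidirectional g.o.\ transfer — but Steps~1--2 of its proof supply exactly the one direction you use, so nothing breaks. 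Second, and more to your credit, your final paragraph flags a genuine subtlety that the paper leaves implicit: when $G=\OO(4)$ and $\fr{so}(4)=\fr{so}(3)\oplus\fr{so}(3)$, Theorem~\ref{main1} only produces an $\op{Ad}(\SO(4))$-invariant product $B_1+\lambda B_2$, and one still must argue that $\OO(4)$-invariance of $g$ forces $\lambda=1$. Your swap argument is correct; to make it airtight one should note that $\fr{m}$ has nonzero intersection with each $\fr{so}(3)$-factor (otherwise the value of $\lambda$ would be invisible on $\fr{m}$), which is easily checked in the relevant cases $\OO(4)/\OO(2)$, $\OO(4)/\OO(3)$, $\OO(4)/\OO(2)\times\OO(2)$. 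So the proposal is correct, takes the same approach, and is in one respect more careful than the paper's own one-line derivation.
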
 

The second main result is the following.

\begin{theorem}\label{main2}Let $G/H$ be the space $\U(n)/\U(n_1)\times \cdots \times \U(n_s)$, where $n_1+\cdots+n_s\leq n$, and let $N_G(H)$ be the normalizer of $H$ in $G$. If $n_1+\dots +n_s=n$, then a $G$-invariant Riemannian metric on $G/H$ is geodesic orbit if and only if it is the normal metric induced from the $\op{Ad}$-invariant inner product $B(X,Y)=-\op{Trace}(XY)$ in $\fr{u}(n)$.  If $n_1+\cdots +n_s<n$, then a $G$-invariant Riemannian metric $g$ on $G/H$ is geodesic orbit if and only if $g=g_{\mu}$, $\mu>0$, where $g_{\mu}$ denotes a one-parameter family of deformations of the normal metric induced from the inner product $B$, along the center of the group $N_G(H)/H$.
\end{theorem}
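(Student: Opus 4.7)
The plan is to combine a block-matrix description of the isotropy representation with the geodesic vector criterion. Setting $B(X,Y)=-\op{Trace}(XY)$ on $\fr{u}(n)$, let $\fr{u}(n)=\fr{h}\oplus\fr{m}$ be the $B$-orthogonal reductive splitting. With respect to the decomposition $\C^{n}=\C^{n_1}\oplus\cdots\oplus\C^{n_s}\oplus\C^{n-m}$ (where $m=n_1+\cdots+n_s$, with the last factor absent when $m=n$), I would decompose
\[
\fr{m}=\bigoplus_{1\le i<j\le s}\fr{m}_{ij}\ \oplus\ \bigoplus_{i=1}^{s}\fr{m}_{i}\ \oplus\ \fr{u}(n-m),
\]
where $\fr{m}_{ij}$ is the $(i,j)$ off-diagonal block and $\fr{m}_{i}$ is the $(i,s{+}1)$ block. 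Each $\fr{m}_{ij}$ is an irreducible and pairwise inequivalent $\Ad(H)$-module; each $\fr{m}_{i}$ is $\Ad(H)$-isotypic of multiplicity $n-m$; and $\fr{u}(n-m)$ is $\Ad(H)$-trivial. A direct computation gives $\fr{n}_{\fr{g}}(\fr{h})=\fr{h}\oplus\fr{u}(n-m)$ when $m<n$, so $N_G(H)/H$ has Lie algebra $\fr{u}(n-m)$ with one-dimensional center $\R\cdot iI_{n-m}$.

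For the flag case $m=n$, every invariant metric has the form $g=\sum_{i<j}x_{ij}B|_{\fr{m}_{ij}}$. I would apply the geodesic vector criterion to $X=X_{ij}+X_{jl}$ for three distinct indices $i,j,l$ and use the triangle bracket $[\fr{m}_{ij},\fr{m}_{jl}]\subset\fr{m}_{il}$: testing against generic $Y\in\fr{m}_{il}$ yields $x_{ij}=x_{jl}=x_{il}$. Ranging over triples (the cases $s=1,2$ being automatic) forces $g$ proportional to $B|_{\fr{m}}$.

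For the base case $m<n$, I would proceed in stages. First, the flag-triangle argument applied to the $\fr{m}_{ij}$ summands forces $g|_{\fr{m}_{ij}}=B|_{\fr{m}_{ij}}$ after a uniform rescaling. Next, for each $i$, applying the geodesic vector identity to $X=X_i(P)+X_{ij}(A)$ with test vector $Y\in\fr{m}_j$ and the mixed bracket $[\fr{m}_{ij},\fr{m}_j]\subset\fr{m}_i$ reduces, by a direct block computation, to $\Re\op{Trace}\bigl(AQ(M_i-I)P^{\ast}\bigr)=0$ for all choices of matrices $A,P,Q$, where the positive Hermitian matrix $M_i$ encodes the $\Ad(\U(n_i))$-invariant inner product $g|_{\fr{m}_i}$. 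Varying the test matrices forces $M_i=I$, i.e.\ $g|_{\fr{m}_i}=B|_{\fr{m}_i}$. Finally, for $X\in\fr{u}(n-m)$ and $Y\in\fr{m}_j$, the bracket $[\fr{u}(n-m),\fr{m}_j]\subset\fr{m}_j$ together with the already normalized $g|_{\fr{m}_j}$ produces $\Ad(\U(n-m))$-invariance of $g|_{\fr{u}(n-m)}$, whence $g|_{\fr{su}(n-m)}=B|_{\fr{su}(n-m)}$; the sole remaining degree of freedom is a positive scalar $\mu$ along the one-dimensional center $\R\cdot iI_{n-m}$, producing the family $g_\mu$.

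For the converse, I would verify by direct computation that each $g_\mu$ satisfies the geodesic vector criterion. The key point is that $\R\cdot iI_{n-m}$ is central in $\fr{n}_{\fr{g}}(\fr{h})$, so the bracket contributions involving it are transparent and one produces an explicit $a(X)\in\fr{h}$ witnessing the identity for every $X\in\fr{m}$. The main technical obstacle will be the middle stage of the base case: $g|_{\fr{m}_i}$ a priori carries $(n-m)^2$ real parameters from the multiplicity of equivalent $\U(n_i)$-submodules, and only the careful coupling with the rigidly normalized $\fr{m}_{ij}$ summand cuts it down to a single scalar.
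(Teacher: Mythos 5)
Your overall architecture matches the paper's: the same block decomposition of $\fr{m}$ (your $\fr{m}_i$ are the paper's $\fr{m}_{0i}$, your $\fr{u}(n-m)$ is the paper's $\fr{n}=\fr{u}(n_0)$), the triangle-bracket argument equating the $\fr{m}_{ij}$-eigenvalues, and an explicit choice of $a\in\fr{h}$ for sufficiency. The place where you genuinely diverge is the treatment of the isotypic blocks $\fr{m}_i$, where you propose parameterizing $g|_{\fr{m}_i}$ by a positive Hermitian $n_0\times n_0$ matrix $M_i$ and extracting $M_i=I$ from a trace identity obtained by projecting the g.o.\ criterion. The paper avoids this computation entirely: by the normalizer lemma (Lemma \ref{NormalizerLemma}) the metric endomorphism of a g.o.\ metric is $\op{Ad}(N_G(H^0))$-equivariant, and each $\fr{m}_{0j}$ is \emph{irreducible} as an $\op{ad}(\fr{n}_{\fr{g}}(\fr{h}))$-module (since the extra $\fr{u}(n_0)$ factor now acts transitively on the multiplicity space), so $A|_{\fr{m}_{0j}}$ is automatically scalar. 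Your route is viable but must also rule out the auxiliary vector $a\in\fr{h}$ interfering with the claimed trace identity, which is exactly the kind of bookkeeping the normalizer lemma sidesteps.

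There is a real gap in your last stage. Showing $g|_{\fr{u}(n_0)}$ is bi-invariant (which is the content of the paper's Lemma \ref{DualNormalizer}, applied to $N_G(H)/H\cong\U(n_0)$) only gives $g|_{\fr{su}(n_0)}=\widetilde{\lambda}\,B|_{\fr{su}(n_0)}$ for some $\widetilde{\lambda}>0$; it does \emph{not} follow that $\widetilde{\lambda}$ coincides with the already-fixed eigenvalue $\lambda$ on $\fr{p}$. You need a separate bracket argument coupling $\fr{su}(n_0)$ to a normalized block: e.g.\ $[e_{12},e_{1,n_0+1}]=-e_{2,n_0+1}\in\fr{m}_2^1$ shows $[\fr{su}(n_0),\fr{m}_1^1]$ has nonzero projection outside $\fr{su}(n_0)\oplus\fr{m}_1^1$, so Lemma \ref{EigenEq} forces $\widetilde{\lambda}=\lambda$. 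Your phrase ``together with the already normalized $g|_{\fr{m}_j}$'' suggests you sense this, but as written the claimed implication ``$\Ad(\U(n-m))$-invariance $\Rightarrow g|_{\fr{su}(n-m)}=B|_{\fr{su}(n-m)}$'' skips the scale determination and is not correct on its own.
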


The case $G/H=\Sp(n)/\Sp(n_1)\times \cdots \times \Sp(n_s)$ has been treated in \cite{Ar-Sou-St}.

\smallskip
We note that the metrics in Theorem \ref{main2} generalize the g.o. metrics on the Berger spheres $\U(n)/\U(n-1)$ (\cite{Ni1}) and the g.o. metrics on the complex Stiefel manifolds $\U(n)/\U(n-k)$ (\cite{So1}).  We also note that the g.o. metrics on the related class of real flag manifolds were recently studied in \cite{Neg}.
 Among other results, it is shown in \cite{Neg} that every g.o. metric on the real flag manifold $\SO(n)/\op{S}(\OO(n_1)\times \cdots \times \OO(n_s))$, $n_1+\cdots +n_s=n$, is normal, which is a special case of Corollary \ref{corol_o(n)}.

The paper is structured as follows: In Sections \ref{Prel} and \ref{Prop}, some preliminary facts for homogeneous spaces and g.o. spaces are given respectively.  Theorems \ref{main1} and \ref{main2} are proved in Sections \ref{proof1} and \ref{proof2} respectively. To this end, we firstly compute the isotropy representation in terms of a suitable basis for each of the spaces (Sections \ref{isotropy1} and \ref{isotropy2} respectively) and then we apply simplification results from \cite{Ni2} and \cite{So1} in order to complete the proofs (Sections \ref{proof1} and \ref{proof2} respectively).

\medskip
\noindent 
{\bf Acknowledgments.}  This research is co-financed by Greece and the European Union (European Social Fund- ESF) through the Operational Programme ``Human Resources Development, Education and Lifelong Learning 2014-2020" in the context of the project ``Geodesic orbit metrics on homogeneous spaces of classical Lie groups" (MIS 5047124). 
Remark \ref{SO(4)} emerged through a discussion of the first author with Professor McKenzie Wang and Remark \ref{referee} was kindly pointed out to us by the referee.  The authors acknowledge both of them for their clarifications.

\section{Invariant metrics on homogeneous spaces}\label{Prel}

Let $G/H$ be a homogeneous space with origin $o=eH$ and assume that $G$ is compact.  Let $\fr{g},\fr{h}$ be the Lie algebras of $G,H$ respectively.  Moreover, let $\op{Ad}:G\rightarrow \op{Aut}(\fr{g})$ and $\op{ad}:\fr{g}\rightarrow \op{End}(\fr{g})$ be the adjoint representations of $G$ and $\fr{g}$ respectively, where $\op{ad}(X)Y=[X,Y]$.  Since $G$ is compact, there exists an $\op{Ad}$-invariant (and hence $\op{ad}$ skew-symmetric) inner product $B$ on $\fr{g}$, which we henceforth fix.  In turn, we have a $B$-orthogonal \emph{reductive decomposition}
\begin{equation}\label{ReducDecompos}
\fr{g}=\fr{h}\oplus \fr{m},
\end{equation}
 where the subspace $\fr{m}$ is $\op{Ad}(H)$-invariant (and $\op{ad}(\fr{h})$-invariant) and is naturally identified with the tangent space of $G/H$ at the origin.

 A Riemannian metric $g$ on $G/H$ is called $G$-invariant if for any $x\in G$, the left translations $\tau_x:G/H\rightarrow G/H$, $pH\mapsto (xp)H$, are isometries of $(G/H,g)$.  The $G$-invariant metrics are in one to one correspondence with $\op{Ad}(H)$-invariant inner products $\langle \ , \ \rangle$ on $\fr{m}$.  Moreover, any such product corresponds to a unique endomorphism $A:\fr{m}\rightarrow \fr{m}$, called the \emph{corresponding metric endomorphism}, that satisfies 
 \begin{equation}\label{MetEnd}\langle X,Y \rangle =B(AX,Y) \ \ \makebox{for all} \ \ X,Y\in \fr{m}.\end{equation} 
 It follows from Equation \eqref{MetEnd} that the metric endomorphism $A$ is symmetric with respect $B$, positive definite and $\op{Ad}(H)$-equivariant, that is $(\op{Ad}(h)\circ A)(X)=(A\circ \op{Ad}(h))(X)$ for all $h\in H$ and $X\in \fr{m}$.  Conversely, any endomorphism on $\fr{m}$ with the above properties determines a unique $G$-invariant metric on $G/H$.  

Since $A$ is diagonalizable, there exists a decomposition $\fr{m}=\bigoplus_{j=1}^l\fr{m}_{\lambda_j}$ into eigenspaces $\fr{m}_{\lambda_j}$ of $A$ corresponding to distinct eigenvalues $\lambda_j$. Each eigenspace $\fr{m}_{\lambda_j}$ is $\op{Ad}(H)$-invariant.  When an $\op{Ad}$-invariant inner product $B$ and a $B$-orthogonal reductive decomposition \eqref{ReducDecompos} have been fixed, we will make no distinction between a $G$-invariant metric $g$ and its corresponding metric endomorphism $A$.

The form of the $G$-invariant metrics on $G/H$ depends on the \emph{isotropy representation} $\op{Ad}^{G/H}:H\rightarrow \op{Gl}(\fr{m})$, defined by $\op{Ad}^{G/H}(h)X:=(d\tau_h)_o(X)$, $h\in H$, $X\in \fr{m}$.  We consider a $B$-orthogonal decomposition
\begin{equation}\label{IsotropyDecom}\fr{m}=\fr{m}_1\oplus \cdots \oplus\fr{m}_s,\end{equation}
 into $\op{Ad}^{G/H}$-invariant and irreducible submodules. We recall that two submodules $\fr{m}_i$ and $\fr{m}_j$ are equivalent if there exists an $\op{Ad}^{G/H}$-equivariant isomorphism $\phi:\fr{m}_i\rightarrow \fr{m}_j$. The simplest case occurs when all the submodules $\fr{m}_i$ are pairwise inequivalent.  Then any $G$-invariant metric $A$ on $G/H$ has a diagonal expression with respect to decomposition \eqref{IsotropyDecom}.  In particular, $\left.A\right|_{\fr{m}_j}=\lambda_j\op{Id}$, $j=1,\dots,s$.

The next proposition is useful for computing the isotropy representation of a reductive homogeneous space. %proof is given on \cite{Arv} 

\begin{prop}\textnormal{(\cite{Ar1})}\label{isotrepr}
Let $G/H$ be a reductive homogeneous space and let $\fr{g} = \fr{h}\oplus\fr{m}$ be a reductive decomposition of $\fr{g}$.  Let $h\in H$, $X\in \fr{h}$ and $Y\in\fr{m}$.  Then the adjoint representation of $G$ decomposes as
$
\Ad^{G}(h)(X + Y) = \Ad^{G}(h)X + \Ad^{G/H}(h)Y
$
that is, the restriction $\Ad^{G}\big|_{H}$ splits into the sum $\Ad^{H}\oplus\Ad^{G/H}$.  We denote by $\chi$ the representation $\Ad^{G/H}$. 
\end{prop}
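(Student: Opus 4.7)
The plan is to exploit the linearity of the adjoint representation together with the two defining invariance properties of the reductive decomposition $\fr{g}=\fr{h}\oplus\fr{m}$. Since $\Ad^{G}(h)$ is a linear endomorphism of $\fr{g}$, one has immediately $\Ad^{G}(h)(X+Y)=\Ad^{G}(h)X+\Ad^{G}(h)Y$; the real content of the statement is therefore that the first summand still lies in $\fr{h}$ and coincides with $\Ad^{H}(h)X$, while the second summand still lies in $\fr{m}$ and coincides with $\Ad^{G/H}(h)Y$.

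For the $\fr{h}$-component I would recall that $H$ is a Lie subgroup of $G$ with Lie algebra $\fr{h}$, so for $h\in H$ the inner automorphism $c_{h}\colon G\to G$, $g\mapsto hgh^{-1}$, restricts to a self-map of $H$; differentiating at the identity yields $\Ad^{G}(h)|_{\fr{h}}=\Ad^{H}(h)$, and in particular $\Ad^{G}(h)X\in\fr{h}$ for every $X\in\fr{h}$. For the $\fr{m}$-component, the very definition of a reductive decomposition forces $\fr{m}$ to be $\Ad(H)$-invariant, so $\Ad^{G}(h)Y\in\fr{m}$. It then remains to identify the restriction $\Ad^{G}(h)|_{\fr{m}}$ with the isotropy representation $\Ad^{G/H}(h)$ under the canonical identification $T_{o}(G/H)\cong\fr{m}$ given by $Y\leftrightarrow\frac{d}{dt}\big|_{t=0}\exp(tY)\cdot o$. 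Because $h\in H$ fixes the origin $o=eH$, a short computation gives
\[
\tau_{h}\bigl(\exp(tY)\cdot o\bigr)=h\exp(tY)h^{-1}\cdot o=\exp\bigl(t\,\Ad^{G}(h)Y\bigr)\cdot o,
\]
and differentiating at $t=0$ produces $(d\tau_{h})_{o}(Y)=\Ad^{G}(h)Y$, which is exactly $\Ad^{G/H}(h)Y$ by definition. Combining the two blocks gives the claimed decomposition $\Ad^{G}\big|_{H}=\Ad^{H}\oplus\Ad^{G/H}$ with respect to the splitting $\fr{g}=\fr{h}\oplus\fr{m}$.

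The main, indeed only, difficulty here is bookkeeping: one must carefully distinguish between the \emph{a priori} different objects $\Ad^{G}|_{H}$, $\Ad^{H}$, and $\Ad^{G/H}$, and verify that under the natural identifications both diagonal blocks arise as restrictions of the common ambient map $\Ad^{G}(h)$ to the two invariant summands. No nontrivial computation is required beyond the summand-wise $\Ad(H)$-stability of the reductive splitting and the standard identification of $T_{o}(G/H)$ with $\fr{m}$ via the exponential map.
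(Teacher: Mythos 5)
Your argument is correct. Note that the paper itself offers no proof of this proposition---it is quoted as a known result from the cited textbook \cite{Ar1}---so there is nothing to compare against line by line; what you have written is the standard textbook argument (linearity of $\Ad^{G}(h)$, the identity $\Ad^{G}(h)|_{\fr{h}}=\Ad^{H}(h)$ coming from conjugation preserving $H$, the $\Ad(H)$-invariance of $\fr{m}$ built into the definition of a reductive decomposition, and the identification of $(d\tau_{h})_{o}$ with $\Ad^{G}(h)|_{\fr{m}}$ via the curve $\exp(tY)\cdot o$), and it is complete. The only point worth making explicit is that when you differentiate $\exp\bigl(t\,\Ad^{G}(h)Y\bigr)\cdot o$ at $t=0$ you a priori obtain the $\fr{m}$-component of $\Ad^{G}(h)Y$ under the identification $T_{o}(G/H)\cong\fr{m}$; it is precisely the $\Ad(H)$-invariance of $\fr{m}$, which you have already established, that guarantees this component is all of $\Ad^{G}(h)Y$, so the step is justified.
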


The following lemma provides a simple condition for proving that two $\Ad^{G/H}$-submodules are inequivalent.

\begin{lemma}\label{EquivalentLemma}Let $G/H$ be a homogeneous space with  reductive decomposition $\fr{g}=\fr{h}\oplus \fr{m}$ and let $\fr{m}_i,\fr{m}_j\subseteq \fr{m}$ be submodules of the isotropy representation $\Ad^{G/H}$.  Assume that for any pair of non-zero vectors $X\in \fr{m}_i$, $Y\in \fr{m}_j$, there exists a vector $a\in \fr{h}$ such that $[a,X]=0$ and $[a,Y]\neq 0$.  Then the submodules $\fr{m}_i,\fr{m}_j$ are $\Ad^{G/H}$-inequivalent.\end{lemma}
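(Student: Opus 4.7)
The plan is to argue by contradiction. I would assume that $\fr{m}_i$ and $\fr{m}_j$ are $\Ad^{G/H}$-equivalent and then derive a contradiction directly from the bracket hypothesis. So suppose there exists a linear isomorphism $\phi\colon \fr{m}_i\to \fr{m}_j$ satisfying
\[\phi\circ \Ad^{G/H}(h) = \Ad^{G/H}(h)\circ \phi \quad\text{for every } h\in H.\]

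The key step is to pass to the infinitesimal form of this equivariance. Differentiating the above identity at $h=e$ along one-parameter subgroups of the form $\exp(ta)$, $a\in \fr{h}$, and using that $\fr{m}$ is $\Ad(H)$-invariant together with Proposition \ref{isotrepr}, the derivative of $\Ad^{G/H}(\exp(ta))$ at $t=0$ coincides with the restriction of $\ad(a)$ to $\fr{m}$. Consequently $\phi$ satisfies the intertwining relation
\[\phi([a,X]) = [a,\phi(X)] \quad\text{for all } a\in \fr{h},\ X\in \fr{m}_i.\]

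To conclude, I would apply the hypothesis. Pick any nonzero $X\in \fr{m}_i$ and set $Y:=\phi(X)\in \fr{m}_j$; since $\phi$ is injective, $Y\neq 0$. The hypothesis then furnishes some $a\in \fr{h}$ with $[a,X]=0$ and $[a,Y]\neq 0$, so the intertwining relation yields
\[0=\phi([a,X])=[a,\phi(X)]=[a,Y]\neq 0,\]
which is absurd. This contradiction shows that no equivariant isomorphism $\phi$ can exist, and hence $\fr{m}_i$ and $\fr{m}_j$ are $\Ad^{G/H}$-inequivalent.

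There is essentially no serious obstacle here: the argument is a standard Schur-type observation, and the only point requiring a small verification is that differentiating the $\Ad^{G/H}$-equivariance produces exactly the bracket action of $\fr{h}$ on $\fr{m}$, which is immediate from Proposition \ref{isotrepr} and the reductivity of the decomposition $\fr{g}=\fr{h}\oplus\fr{m}$.
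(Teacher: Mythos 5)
Your proof is correct and follows essentially the same route as the paper's: assume an $\Ad^{G/H}$-equivariant isomorphism $\phi$ exists, pass to $\ad(\fr{h})$-equivariance, take $Y=\phi(X)$, and contradict the hypothesis. The only difference is that you spell out the differentiation step justifying $\ad(\fr{h})$-equivariance, which the paper states without elaboration.
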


\begin{proof}
If $\fr{m}_i,\fr{m}_j$ were equivalent, then there exists an $\Ad^{G/H}$-equivariant isomorphism $\phi:\fr{m}_i\rightarrow \fr{m}_j$.  Let $X$ be a non-zero vector in $\fr{m}_i$ and set $Y:=\phi(X)\in \fr{m}_j$.  The $\operatorname{Ad}^{G/H}$-equivariance of $\phi$ implies that $\phi$ is $\op{ad}_{\fr{h}}$-equivariant, and hence, $\phi([a,X])=[a,Y]$ for any $a\in \fr{h}$.  However, $\phi$ is an isomorphism, therefore, $[a,X]$ is non-zero if and only if $[a,Y]$ is non-zero, which contradicts the hypothesis of the lemma.  Hence, the submodules $\fr{m}_i,\fr{m}_j$ are inequivalent.\end{proof}

\begin{remark}{\rm  For any two $\Ad^{G/H}$-submodules $\fr{m}_1,\fr{m}_2$, we denote by $[\fr{m}_1,\fr{m}_2]$ the space generated by the vectors $[X_1,X_2]$ where $X_1\in \fr{m}_1$ and $X_2\in \fr{m}_2$.  Similarly, denote by $[\fr{h},\fr{m}_1]$ the space generated by the vectors $[a,X_1]$ where $a\in \fr{h}$ and $X_1\in \fr{m}_1$.  If $\fr{m}_1,\fr{m}_2$ are $B$-orthogonal then $[\fr{m}_1,\fr{m}_2]\subseteq \fr{m}$.  Indeed, $[\fr{m}_1,\fr{m}_2]$ is $B$-orthogonal to $\fr{h}$ because $B([\fr{m}_1,\fr{m}_2],\fr{h})\subseteq B(\fr{m}_1,[\fr{m}_2,\fr{h}])\subseteq B(\fr{m}_1,\fr{m}_2)=\{0\}$.  Moreover, $[\fr{m}_1,\fr{m}_2]$ is also an $\Ad^{G/H}$-submodule of $\fr{m}$ and $[\fr{h},\fr{m}_1]$ is an $\Ad^{G/H}$-submodule of $\fr{m}_1$.}
\end{remark}

\section{Properties of geodesic orbit spaces}\label{Prop}

\begin{definition}
A $G$-invariant metric $g$ on $G/H$ is called a geodesic orbit metric (g.o. metric) if any geodesic of $(G/H,g)$ through $o$ is an orbit of a one parameter subgroup of $G$.  Equivalently, $g$ is a geodesic orbit metric if for any geodesic $\gamma$ of $(G/H,g)$ though $o$ there exists a non-zero vector $X\in \fr{g}$ such that $\gamma(t)=\exp (tX)\cdot o$, $t\in \mathbb R$.  The space $(G/H,g)$ is called a geodesic orbit space (g.o. space). 

\end{definition}

Let $G/H$ be a homogeneous space with $G$ compact.  We fix an $\op{Ad}$-invariant inner product $B$ on $\fr{g}$ and consider the $B$-orthogonal reductive decomposition \eqref{ReducDecompos}.  Moreover, identify each $G$-invariant metric on $G/H$ with the corresponding metric endomorphism $A:\fr{m}\rightarrow \fr{m}$ satisfying Equation \eqref{MetEnd}.  We have the following condition.

\begin{prop}\emph{(\cite{Al-Ar}, \cite{So1})}\label{GOCond} The metric $A$ on $G/H$ is geodesic orbit if and only if for any vector $X\in\fr{m}$ there exists a vector $a\in \fr{h}$ such that  
\begin{equation}\label{cor}[a+X,AX]=0.\end{equation}
\end{prop}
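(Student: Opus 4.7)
The plan is to reduce Equation \eqref{cor} to the classical geodesic criterion on a reductive homogeneous space, which characterizes one-parameter subgroup orbits that are geodesics: for $Z\in\fr g$, the curve $\gamma_Z(t):=\exp(tZ)\cdot o$ is a geodesic of $(G/H,g)$ through $o$ if and only if $\langle [Z,Y]_{\fr m},\,Z_{\fr m}\rangle = 0$ for every $Y\in\fr m$, where $Z_{\fr m}$ denotes the $\fr m$-component of $Z$ and $\langle\cdot,\cdot\rangle$ is the inner product on $\fr m$ associated with $A$. To derive this criterion I would use the fundamental vector field $Z^*$ of the $G$-action, which satisfies $\gamma_Z'(t)=Z^*_{\gamma_Z(t)}$, together with the fact that $a^*_o=0$ for any $a\in\fr h$, so that $\gamma_Z'(0)=Z_{\fr m}=:X$. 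Since $G$ acts by isometries, $Z^*$ is Killing and the geodesic equation $\nabla_{\gamma_Z'}\gamma_Z'\equiv 0$ is equivalent to its vanishing at the single point $o$, which can be tested against all $Y\in\fr m\cong T_o(G/H)$ via the Koszul formula; after collecting terms and using $a^*_o=0$ to annihilate several, one obtains exactly the pairing condition displayed above.

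Next I would rephrase this criterion using the metric endomorphism $A$. Write $Z=a+X$ with $a\in\fr h$, $X\in\fr m$. Using $\langle U,V\rangle=B(AU,V)$ from \eqref{MetEnd}, the fact that $AX\in\fr m$, $B(\fr h,\fr m)=0$, and the $\op{ad}$-skew-symmetry of $B$, one gets
\[
\langle [Z,Y]_{\fr m},X\rangle = B([Z,Y],AX) = -B(Y,[Z,AX]),
\]
which vanishes for every $Y\in\fr m$ if and only if $[Z,AX]_{\fr m}=0$. The $\fr h$-component of $[Z,AX]$ is automatic: for any $b\in\fr h$, $\op{ad}(b)$ is skew-symmetric with respect to $\langle\cdot,\cdot\rangle$ on $\fr m$, hence $\langle [b,X],X\rangle=0$, and then
\[
B([X,AX],b) = -B(AX,[X,b]) = B(AX,[b,X]) = \langle X,[b,X]\rangle = 0,
\]
so $[X,AX]_{\fr h}=0$. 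Since $[a,AX]\in\fr m$ by $\op{ad}(\fr h)$-invariance of $\fr m$, we also have $[Z,AX]_{\fr h}=0$, and therefore the criterion is equivalent to the equality $[a+X,AX]=0$ in $\fr g$.

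To assemble the two directions: for the ``only if'' direction, fix $X\in\fr m$ and let $\gamma$ be the unique geodesic with $\gamma(0)=o$, $\gamma'(0)=X$. Since $g$ is g.o., $\gamma(t)=\exp(tZ)\cdot o$ for some $Z\in\fr g$, and $Z_{\fr m}=\gamma'(0)=X$ forces $Z=a+X$ with $a\in\fr h$; the reformulated geodesic criterion then yields $[a+X,AX]=0$. Conversely, if the bracket condition holds, then for each $X\in\fr m$ one can choose the corresponding $a\in\fr h$ and the curve $\gamma(t)=\exp(t(a+X))\cdot o$ is a geodesic through $o$ with initial velocity $X$, so every geodesic through $o$ arises as such an orbit. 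Translating by an arbitrary $\tau_g$, $g\in G$, then places every geodesic of $(G/H,g)$ on an orbit of a one-parameter subgroup of $G$, so $g$ is geodesic orbit.

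I expect the main obstacle to be the careful derivation of the geodesic criterion via the Koszul formula applied to fundamental vector fields, since it requires tracking the sign in $[X^*,Y^*]=-[X,Y]^*$ and handling the $U$-tensor contribution that arises from the possible failure of the metric to be naturally reductive. Once the criterion is in place, the algebraic manipulation that brings it into the form $[a+X,AX]=0$ and the synthesis of both directions are routine.
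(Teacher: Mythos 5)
Your proof is correct and follows the standard route used in the cited references \cite{Al-Ar} and \cite{So1}: derive the Kowalski--Vanhecke geodesic-vector criterion $\langle [Z,Y]_{\fr m},Z_{\fr m}\rangle=0$ for all $Y\in\fr m$, translate it via $\langle\cdot,\cdot\rangle=B(A\cdot,\cdot)$ and the $\ad$-skew-symmetry of $B$ into $[Z,AX]_{\fr m}=0$, and observe that the $\fr h$-component vanishes automatically. Note that the paper itself does not prove this proposition but cites it, so there is no in-text proof to compare against; your argument correctly reproduces the standard one, including the careful check that $[X,AX]$ lies in $\fr m$.
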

The following result, which we will call the \emph{normalizer lemma}, can be used to simplify the necessary form of the g.o. metrics on $G/H$ by using the normalizer $N_G(H^0)$.

\begin{lemma}\label{NormalizerLemma}\emph{(\cite{Ni2})} 
The inner product $\langle \ ,\ \rangle$ in {\rm (\ref{MetEnd})}, generating the metric of a geodesic orbit Riemannian space $(G/H,g)$, is not only $\op{Ad}(H)$-invariant but also $\op{Ad}(N_G(H^0))$-invariant, where $N_G(H^0)$ is the normalizer of the identity component $H^0$ of the group $H$ in $G$.\end{lemma}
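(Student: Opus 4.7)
The plan is to reduce the claim to an infinitesimal statement and then use the g.o.\ condition of Proposition \ref{GOCond}. Identify the Lie algebra of $N_G(H^0)$ with the normalizer $\fr{n}:=\{X\in \fr{g}:[X,\fr{h}]\subseteq \fr{h}\}$, and take the $B$-orthogonal decomposition $\fr{n}=\fr{h}\oplus \fr{c}$, where $\fr{c}=\fr{n}\cap \fr{m}$. For $Z\in \fr{c}$, the combination of $B$-skewness of $\ad$ with $[Z,\fr{h}]\subseteq \fr{h}$ forces $[Z,\fr{m}]\subseteq \fr{m}$, so $\ad(Z)$ restricts to an endomorphism of $\fr{m}$. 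Since the identity component of $N_G(H^0)$ is generated by $H^0$ together with $\exp(\fr{c})$, and since $\Ad(H)$-invariance is already in hand, it suffices to show that for every $Z\in \fr{c}$ the operator $\ad(Z)|_{\fr{m}}$ is skew with respect to $\langle\cdot,\cdot\rangle$; equivalently, $A\circ \ad(Z)=\ad(Z)\circ A$ on $\fr{m}$. Cosets of $N_G(H^0)$ outside the identity component are then handled by continuity using the finiteness of the component group.

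As a first step I would project the identity $[a_X+X,AX]=0$ onto $\fr{h}$. Since $[a_X,AX]\in \fr{m}$, this projection gives $[X,AX]_{\fr{h}}=0$ for every $X\in \fr{m}$; polarizing yields $[X,AY]_{\fr{h}}+[Y,AX]_{\fr{h}}=0$ for all $X,Y\in \fr{m}$. Specializing to $X=Z\in \fr{c}$ and using $[Z,AY]\in \fr{m}$ forces $[Y,AZ]_{\fr{h}}=0$ for every $Y\in \fr{m}$, which by $\ad$-skewness of $B$ means $AZ$ centralizes $\fr{h}$. In particular $A(\fr{c})\subseteq \fr{m}^{H}\subseteq \fr{c}$, giving a first, weaker instance of $A$-invariance.

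To extract the full commutation I would apply Proposition \ref{GOCond} to the perturbed vector $X_t=Y+tZ\in \fr{m}$ with $Y\in \fr{m}$ and $Z\in \fr{c}$, obtaining $a(t)\in \fr{h}$ such that $[a(t)+Y+tZ,\,AY+tAZ]=0$. A compactness argument, performed modulo the stabilizer of $AY$ in $\fr{h}$, should allow a bounded selection of $a(t)$ as $t\to 0$ and the extraction of subsequential limits $a(t_n)\to a_0$ and $(a(t_n)-a_0)/t_n\to b_0$. Collecting the order-$t$ term of the $\fr{m}$-component of the g.o.\ identity then produces
\[
[b_0,AY]+[a_0,AZ]+[Z,AY]+[Y,AZ]_{\fr{m}}=0.
\]
Averaging this relation over $H^0$ and using the $\Ad(H)$-equivariance of $A$ eliminates the $a_0$ and $b_0$ contributions and isolates the commutation $A[Z,Y]=[Z,AY]$.

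The main obstacle in this outline is the rigorous selection of $a(t)$ with a controlled first-order expansion: the g.o.\ condition does not determine $a$ uniquely, and one must show that a bounded choice admitting a linear Taylor expansion exists. This can be handled either by a semialgebraic selection theorem (the defining condition $[a+X,AX]=0$ is polynomial in $(a,X)$), or by decomposing $\fr{h}=\ker(\ad(AY))\oplus \ker(\ad(AY))^{\perp}$ and solving for the component in the complement iteratively. Once the infinitesimal commutation is secured, exponentiation gives $\Ad(\exp Z)$-invariance of $\langle\cdot,\cdot\rangle$ for all $Z\in \fr{c}$; combining with $\Ad(H)$-invariance and continuity across the components of $N_G(H^0)$ yields the lemma.
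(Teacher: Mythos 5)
The paper does not actually prove this lemma: it is quoted verbatim from \cite{Ni2} (Nikonorov's structure paper), so there is no in-paper argument to compare yours with; your attempt has to stand on its own, and as written it does not.

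Your set-up is sound: identifying $\fr{c}=\fr{n}_{\fr g}(\fr h)\cap\fr{m}$ (which is in fact the centralizer of $\fr h$ in $\fr m$), observing that $\ad(Z)$ preserves $\fr m$ for $Z\in\fr c$, reducing invariance under the identity component to the skewness of $\ad(Z)|_{\fr m}$ with respect to $\langle\cdot,\cdot\rangle$, and the derivation of $[X,AX]_{\fr h}=0$ and $A\fr c\subseteq\fr c$ are all correct. But the core of the proof is missing. First, the claim that the remaining cosets of $N_G(H^0)$ are ``handled by continuity using the finiteness of the component group'' is false: invariance under $\Ad$ of the identity component together with $\Ad(H)$-invariance only gives invariance under $H\cdot N_G(H^0)^0$, and $N_G(H^0)$ can have further components that are not limits of anything you control (e.g.\ Weyl-type elements in $N_G(T)$ when $H^0=T$ is a torus); the lemma as stated concerns the full, generally disconnected, normalizer. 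Second, the infinitesimal step is only a plan, not a proof: the existence of a selection $t\mapsto a(t)$ that is bounded and differentiable at $t=0$ is exactly the hard point (the affine solution space of $[a,AX]=-[X,AX]$ can jump in dimension as $X$ varies), and you acknowledge it is not established. Third, even granting the first-order identity $[b_0,AY]+[a_0,AZ]+[Z,AY]+[Y,AZ]_{\fr m}=0$, it contains no term of the form $A[Z,Y]$, so it cannot ``isolate'' the desired commutation $A[Z,Y]=[Z,AY]$ without substantial further input; for instance, pairing it with $AY$ via $B$ collapses, by skew-symmetry of $\ad$, to a consequence of the zeroth-order relation $[a_0+Y,AY]=0$ and yields no information. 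The ``averaging over $H^0$'' that is supposed to kill the $a_0$- and $b_0$-terms is likewise unjustified, since $a_0$ and $b_0$ vary with $Y$ under the action. In short, the reduction is fine but the theorem is not proved; if you want a complete argument you should consult the proof in \cite{Ni2}, which proceeds quite differently (via right translations $gH^0\mapsto gbH^0$ by elements $b\in N_G(H^0)$ and their interaction with homogeneous geodesics), rather than by differentiating the geodesic-vector equation.
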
 

\noindent As a result of the normalizer lemma, the metric endomorphism $A$ of a g.o. metric on $G/H$ is $\op{Ad}(N_G(H^0))$-equivariant. We will now state a complementary result to the normalizer lemma for compact spaces, that characterizes the restriction of a g.o. metric to the compact Lie group $N_G(H^0)/H^0$. 

\begin{lemma}\label{DualNormalizer}
Let $G$ be a compact Lie group and let $(G/H,g)$ be a geodesic orbit space with the $B$-orthogonal reductive decomposition $\fr{g}=\fr{h}\oplus \fr{m}$, where $B$ is an $\op{Ad}$-invariant inner product on $\fr{g}$. Let $A:\fr{m}\rightarrow \fr{m}$ be the corresponding metric endomorphism of $g$, and let $\fr{n}\subseteq \fr{m}$ be the Lie algebra of the compact Lie group $N_G(H^0)/H^0$.  Then the restriction of $A$ to $\fr{n}$   defines a bi-invariant metric on $N_G(H^0)/H^0$.
\end{lemma}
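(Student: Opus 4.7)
My plan is to reduce the statement directly to the $\op{Ad}(N_G(H^0))$-invariance of the inner product provided by Lemma \ref{NormalizerLemma}, after checking that the Lie bracket and the endomorphism $A$ both behave correctly on the subspace $\fr{n}$.

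First, I would identify the Lie algebra of $N_G(H^0)/H^0$ with the $B$-orthogonal complement $\fr{n}=\fr{n}_G(\fr{h})\cap \fr{m}$ of $\fr{h}$ inside the normalizer subalgebra $\fr{n}_G(\fr{h})=\{Z\in \fr{g}:[Z,\fr{h}]\subseteq \fr{h}\}$. Because $[\fr{h},\fr{m}]\subseteq \fr{m}$, the normalizer condition $[Z,\fr{h}]\subseteq \fr{h}$ for $Z\in \fr{m}$ forces $[\fr{h},Z]=0$, so $\fr{n}$ coincides with the fixed-point set $\fr{m}^{\fr{h}}$ of the $\op{ad}(\fr{h})$-action on $\fr{m}$. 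I would then verify that $\fr{n}$ is closed under the bracket of $\fr{g}$: for $X,Y\in \fr{n}$ the bracket $[X,Y]$ lies in the subalgebra $\fr{n}_G(\fr{h})$, and the $\op{ad}$-skew-symmetry of $B$ gives $B([X,Y],\fr{h})=-B(Y,[X,\fr{h}])\subseteq B(Y,\fr{h})=\{0\}$, whence $[X,Y]\in \fr{m}\cap \fr{n}_G(\fr{h})=\fr{n}$. In particular, the Lie bracket on the quotient $\fr{n}_G(\fr{h})/\fr{h}\cong \fr{n}$ agrees with the restriction of $[\,,\,]_{\fr{g}}$ to $\fr{n}$.

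Next, I would check that $A$ preserves $\fr{n}$. The $\op{Ad}(H)$-equivariance of $A$, combined with connectedness of $H^0$, yields $A\circ \op{ad}(X)=\op{ad}(X)\circ A$ on $\fr{m}$ for every $X\in \fr{h}$, so $A$ maps the $\op{ad}(\fr{h})$-fixed subspace $\fr{m}^{\fr{h}}=\fr{n}$ to itself. Thus $A|_{\fr{n}}$ is a symmetric positive definite endomorphism of $\fr{n}$ and $\langle X,Y\rangle_{\fr{n}}:=B(A|_{\fr{n}}X,Y)$ induces a well-defined left-invariant Riemannian metric on the compact connected Lie group $N_G(H^0)/H^0$.

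Finally, I would invoke Lemma \ref{NormalizerLemma}: since $\langle\,,\,\rangle$ is $\op{Ad}(N_G(H^0))$-invariant on $\fr{m}$, differentiation at the identity and specialization to $X\in \fr{n}\subseteq \fr{n}_G(\fr{h})$ and $Y,Z\in \fr{n}\subseteq \fr{m}$ give
\begin{equation*}
\langle [X,Y],Z\rangle_{\fr{n}}+\langle Y,[X,Z]\rangle_{\fr{n}}=0,
\end{equation*}
which is exactly the $\op{ad}$-skew-symmetry of $\langle\,,\,\rangle_{\fr{n}}$; hence $\langle\,,\,\rangle_{\fr{n}}$ is $\op{Ad}$-invariant on $N_G(H^0)/H^0$ and the induced metric is bi-invariant. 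There is no serious obstacle beyond this bookkeeping; the essential content is the identification $\fr{n}=\fr{m}^{\fr{h}}$, the closure of this subspace under $[\,,\,]$, and the direct transport of the normalizer lemma to the restriction.
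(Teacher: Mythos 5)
Your proof is correct, but it follows a different route from the paper's main argument; it is in fact the alternative approach that the paper attributes to the referee in Remark \ref{referee}. The paper's proof first establishes $A\fr{n}\subseteq\fr{n}$ by applying Lemma \ref{NormalizerLemma} to $G/N_G(H^0)$ to get $A\fr{p}\subseteq\fr{p}$ and then using the $B$-symmetry of $A$, and next shows via Proposition \ref{GOCond} that $\left.A\right|_{\fr{n}}$ is itself a g.o.\ metric on the Lie group $N_G(H^0)/H^0$ (the bracket relation $[a+X,AX]=0$ transfers directly), finally invoking the known result \cite{Al-Ni} that a left-invariant g.o.\ metric on a compact Lie group is bi-invariant. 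Your argument instead bypasses the g.o.\ characterization entirely: you identify $\fr{n}$ with the $\op{ad}(\fr{h})$-fixed subspace $\fr{m}^{\fr{h}}$ (a clean observation using $[\fr{h},\fr{m}]\subseteq\fr{m}$ together with $\fr{h}\cap\fr{m}=\{0\}$, which the paper does not spell out), use that identification together with $\op{ad}(\fr{h})$-equivariance to show $A\fr{n}\subseteq\fr{n}$, and then differentiate the $\op{Ad}(N_G(H^0))$-invariance from Lemma \ref{NormalizerLemma} to obtain $\op{ad}$-skew-symmetry of $\langle\,,\,\rangle$ on $\fr{n}$, i.e.\ bi-invariance. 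Both proofs are valid; yours is more self-contained (no appeal to the theorem on g.o.\ metrics on Lie groups) and isolates the structural fact $\fr{n}=\fr{m}^{\fr{h}}$, while the paper's proof highlights that the restriction of a g.o.\ metric to the fiber $N_G(H^0)/H^0$ is again a g.o.\ metric, which is a point of independent interest. One small point worth being explicit about in your version: you should note that $\op{Ad}(N_G(H^0))$ preserves $\fr{n}$ (since it preserves both $\fr{n}_{\fr{g}}(\fr{h})$ and $\fr{h}$, hence their $B$-orthogonal complement), so that the restricted inner product on $\fr{n}$ really is $\op{Ad}(N_G(H^0)/H^0)$-invariant and not merely $\op{ad}(\fr{n})$-skew-symmetric — this matters if $N_G(H^0)/H^0$ is disconnected.
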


\begin{proof}  We denote by $\fr{n}_{\fr{g}}(\fr{h})\subset \fr{g}$ the Lie algebra of $N_G(H^0)$.  We have a $B$-orthogonal decomposition $\fr{n}_{\fr{g}}(\fr{h})=\fr{h}\oplus \fr{n}$, where $\fr{n}$ coincides with the Lie algebra of $N_G(H^0)/H^0$.  Moreover, we have a $B$-orthogonal decomposition $\fr{m}=\fr{n}\oplus \fr{p}$, where $\fr{p}$ coincides with the tangent space of $G/N_G(H^0)$ at the origin.  By the normalizer lemma, the restriction of $A$ on $\fr{p}$ defines an invariant metric on $G/N_G(H^0)$, and thus $A\fr{p}\subseteq \fr{p}$.  By taking into account the symmetry of $A$ with respect to the product $B$, we deduce that $B(A\fr{n},\fr{p})=B(\fr{n},A\fr{p})\subseteq B(\fr{n},\fr{p})=\{0\}$.  Hence, the image $A\fr{n}$ is $B$-orthogonal to $\fr{p}$ which, along with decomposition $\fr{m}=\fr{n}\oplus \fr{p}$, yields $A\fr{n}\subseteq \fr{n}$. Therefore, the restriction $\left.A\right|_{\fr{n}}:\fr{n}\rightarrow \fr{n}$ defines a left-invariant metric on $N_G(H^0)/H^0$.  Since $A$ is a g.o. metric on $G/H$, Proposition \ref{GOCond} implies that for any $X\in \fr{n}$ there exists a vector $a\in \fr{h}$ such that $0=[a+X,AX]=[a+X,\left.A\right|_{\fr{n}}X]$. Therefore, by the same proposition, $\left.A\right|_{\fr{n}}$ defines a g.o. metric on $N_G(H^0)/H^0$.  On the other hand, any left-invariant g.o. metric on a Lie group is necessarily bi-invariant (\cite{Al-Ni}), and hence $\left.A\right|_{\fr{n}}$ is a bi-invariant metric on $N_G(H^0)/H^0$.\end{proof}

\begin{remark}\label{referee}
{\rm As an alternative to the above proof, it was observed by the referee that Lemma \ref{DualNormalizer} follows easily from Lemma \ref{NormalizerLemma}, if we take into account that any $\op{Ad}(N_G(H^0))$-invariant Riemannian metric on the Lie group $N_G(H^0)/H^0$ is
generated by a suitable bi-invariant Riemannian metric on $N_G(H^0)/H^0$.}
\end{remark}

 \begin{remark}\label{conclusion} {\rm Let $\fr{p}\subset \fr{m}$ be the tangent space of $G/N_G(H^0)$, let $\fr{n}\subset \fr{m}$ be the Lie algebra of $N_G(H^0)/H^0$, and consider the decomposition $\fr{m}=\fr{n}\oplus \fr{p}$.  By combining Lemmas \ref{NormalizerLemma} and \ref{DualNormalizer}, we conclude that the metric endomorphism $A$ corresponding to a g.o. metric on $G/H$ has the block-diagonal form
$$A=\begin{pmatrix}\left.A\right|_{\fr{n}} & 0\\
0& \left.A\right|_{\fr{p}}\end{pmatrix}.
$$}
\end{remark}

The next lemma describes the invariant g.o. metrics on compact Lie groups, in terms of their metric endomorphism with respect to an $\op{Ad}$-invariant inner product $B$.  Recall that the Lie algebra $\fr{g}$ of a compact Lie group $G$ has the (Lie algebra) direct sum decomposition $\fr{g}=\fr{g}_1\oplus \cdots \oplus \fr{g}_k\oplus \fr{z}$, where $\fr{g}_j$ are the simple ideals of $\fr{g}$ and $\fr{z}$ is its center.

\begin{lemma}\label{GOLieGroups}\emph{(\cite{Al-Ni}, \cite{So1})} 
Let $G$ be a compact Lie group with Lie algebra $\fr{g}=\fr{g}_1\oplus \cdots \oplus \fr{g}_k\oplus \fr{z}$.  A left invariant metric $A$ on $G$ is a g.o. metric if and only if it is bi-invariant.  In particular, $A$ is a g.o. metric if and only if 
\begin{equation*}A=\begin{pmatrix} 
 \lambda_1\left.\op{Id}\right|_{\fr{g}_1} & 0 & \cdots &0\\
  \vdots & \ddots & \cdots &\vdots\\
  0&\cdots &\lambda_k\left.\op{Id}\right|_{\fr{g}_k} &0\\
  0& \cdots & 0 & \left.A\right|_{\fr{z}}
  \end{pmatrix}, \ \lambda_j>0.
\end{equation*}
\end{lemma}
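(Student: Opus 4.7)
The plan is to prove both implications of the equivalence and then characterize bi-invariant metrics through the direct-sum decomposition $\fr{g}=\fr{g}_1\oplus\cdots\oplus\fr{g}_k\oplus\fr{z}$. One direction is classical: any bi-invariant metric on a compact Lie group has the property that its geodesics through $e$ are precisely the one-parameter subgroups (via the Levi-Civita formula $\nabla_XY=\tfrac12[X,Y]$ on left-invariant vector fields), so every such metric is automatically a g.o. metric. The content of the lemma therefore lies in the converse together with the explicit block form.

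For the converse I would specialize Proposition \ref{GOCond} to $H=\{e\}$, so that $\fr{h}=0$ and $\fr{m}=\fr{g}$: a left-invariant metric $A$ on $G$ is g.o. if and only if $[X,AX]=0$ for every $X\in\fr{g}$. Polarizing this identity in $X$ yields $[X,AY]+[Y,AX]=0$, which rearranges to $\op{ad}(X)\circ A=\op{ad}(AX)$. Taking the $B$-transpose of both sides, using that $A$ is $B$-symmetric while each $\op{ad}(X)$ is $B$-skew, gives $A\circ\op{ad}(X)=\op{ad}(AX)=\op{ad}(X)\circ A$. Hence $A$ commutes with every $\op{ad}(X)$, which is exactly the condition that the inner product $\langle\cdot,\cdot\rangle=B(A\cdot,\cdot)$ be $\Ad(G^0)$-invariant, i.e.\ that the metric be bi-invariant.

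To obtain the explicit block form I would use $\Ad$-invariance of $\langle\cdot,\cdot\rangle$ to establish mutual orthogonality of the summands $\fr{g}_1,\dots,\fr{g}_k,\fr{z}$: for $X\in\fr{g}_i$, $Y$ in a different summand (another simple factor or the center), and any $Z\in\fr{g}_i$, the identity $\langle[Z,X],Y\rangle=-\langle X,[Z,Y]\rangle=0$ together with $[\fr{g}_i,\fr{g}_i]=\fr{g}_i$ (since $\fr{g}_i$ is simple) forces $\langle\fr{g}_i,Y\rangle=0$. On each simple factor, the restriction $\langle\cdot,\cdot\rangle|_{\fr{g}_i}$ is an $\Ad(G_i)$-invariant symmetric bilinear form, and by simplicity the space of such forms is one-dimensional, so $\langle\cdot,\cdot\rangle|_{\fr{g}_i}=\lambda_i\,B|_{\fr{g}_i}$ for some $\lambda_i>0$, which translates to $A|_{\fr{g}_i}=\lambda_i\,\op{Id}$. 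On $\fr{z}$ the adjoint action is trivial, so $A|_{\fr{z}}$ can be an arbitrary symmetric positive-definite endomorphism, producing exactly the stated block form.

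The main obstacle is the polarization step: it is easy to conflate the identity $\op{ad}(X)A=\op{ad}(AX)$ produced directly by polarization with the genuine bi-invariance condition $A\,\op{ad}(X)=\op{ad}(X)\,A$, and the short $B$-transpose trick (using that $A$ is $B$-symmetric and $\op{ad}(X)$ is $B$-skew) is the essential bridge between the two. Once this is in hand, the remaining decomposition argument is standard structural material about compact Lie algebras.
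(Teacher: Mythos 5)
Your proof is correct, and since the paper only cites this lemma (to \cite{Al-Ni}, \cite{So1}) without reproducing a proof, there is no in-text argument to compare against; your reconstruction matches the standard route in those references. The key chain is right: Proposition \ref{GOCond} with $\fr{h}=\{0\}$ gives $[X,AX]=0$, polarization gives $\op{ad}(X)\circ A=\op{ad}(AX)$, and the $B$-transpose trick (using $A^{T}=A$ and $\op{ad}(X)^{T}=-\op{ad}(X)$) converts that one-sided identity into the genuine commutation $A\circ\op{ad}(X)=\op{ad}(X)\circ A$, which is exactly $\op{ad}$-invariance of $\langle\cdot,\cdot\rangle=B(A\cdot,\cdot)$. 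Your structural step is also sound: orthogonality of the distinct summands follows from $[\fr{g}_i,\fr{g}_i]=\fr{g}_i$ and $\op{ad}$-invariance, which gives $A\fr{g}_i\subseteq\fr{g}_i$ and $A\fr{z}\subseteq\fr{z}$, and then the scalar form on each $\fr{g}_i$ follows because an $\op{ad}(\fr{g}_i)$-equivariant symmetric operator has $\op{ad}$-invariant eigenspaces, hence (by simplicity) a single eigenvalue. Two small caveats worth keeping in mind but not gaps: (i) the argument literally yields $\op{Ad}(G^{0})$-invariance, so ``bi-invariant'' should be read up to connectedness (which is the setting in which the lemma is applied in the paper); (ii) for completeness of the ``if and only if'' one should also note that the block form conversely implies bi-invariance, though this is routine once the block form is matched against a normal inner product on each simple ideal.
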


 We  set some notation. For a subspace $W$ of a vector space $V$ we write $V=W\oplus W^{\bot}$ with respect to some inner product $B$ on $V$.  Then, for $v\in V$ it is $v=w+w'$, where $w\in W$ and $w'\in W^{\bot}$.  We say that $v$ 
  has \emph{non zero projection on $W$} if $w\neq 0$. 
  Moreover, we  say that a subset $S$ of $V$ has non zero projection on $W$ if there exists a vector $v\in S$ that has non zero projection on $W$.  
  
  The following lemma is useful, since it will enable us to equate some of the eigenvalues of a g.o. metric.

\begin{lemma}\label{EigenEq}\emph{(\cite{So1})}
 Let $(G/H,g)$ be a g.o. space with $G$ compact and with corresponding metric endomorphism $A$ with respect to an $\op{Ad}$-invariant inner product $B$. Let $\fr{m}$ be the $B$-orthogonal complement of $\fr{h}$ in $\fr{g}$.\\ 
\textbf{1.} Assume that $\fr{m}_1,\fr{m}_2$ are $\operatorname{ad}(\fr{h})$-invariant, pairwise $B$-orthogonal subspaces of $\fr{m}$ such that $[\fr{m}_1,\fr{m}_2]$ has non-zero projection on ${(\fr{m}_1\oplus \fr{m}_2)^\bot}$. Let $\lambda_1,\lambda_2$ be eigenvalues of $A$ such that $\left.A\right|_{\fr{m}_i}=\lambda_i\op{Id}$, $i=1,2$.  Then $\lambda_1=\lambda_2$.\\
\textbf{2.} Assume that $\fr{m}_1,\fr{m}_2,\fr{m}_3$ are $\operatorname{ad}(\fr{h})$-invariant, pairwise $B$-orthogonal subspaces of $\fr{m}$ such that $[\fr{m}_1,\fr{m}_2]$ has non-zero projection on $\fr{m}_3$. Let $\lambda_1,\lambda_2,\lambda_3$ be eigenvalues of $A$ such that $\left.A\right|_{\fr{m}_i}=\lambda_i\op{Id}$, $i=1,2,3$.  Then $\lambda_1=\lambda_2=\lambda_3$.
\end{lemma}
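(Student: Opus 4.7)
The plan is to apply the g.o.\ characterization in Proposition~\ref{GOCond} to carefully chosen test vectors $X \in \fr{m}$, expand the resulting vanishing bracket condition using the eigenvalue structure of $A$, and then project onto a suitable subspace so that the contribution of the unknown element $a \in \fr{h}$ drops out.

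For part \textbf{1}, I would take $X = X_1 + X_2$ with $X_i \in \fr{m}_i$. Since $A|_{\fr{m}_i} = \lambda_i\op{Id}$, Proposition~\ref{GOCond} produces an $a \in \fr{h}$ such that
\[
0 = [a+X_1+X_2,\lambda_1 X_1+\lambda_2 X_2] = \lambda_1[a,X_1]+\lambda_2[a,X_2]+(\lambda_2-\lambda_1)[X_1,X_2].
\]
The $\op{ad}(\fr{h})$-invariance of the $\fr{m}_i$ gives $[a,X_i] \in \fr{m}_i$, so after projecting to $(\fr{m}_1\oplus \fr{m}_2)^{\perp}$ only the last term survives, yielding
\[
(\lambda_2 - \lambda_1)\,\op{proj}_{(\fr{m}_1\oplus \fr{m}_2)^{\perp}}[X_1,X_2] = 0.
\]
If $\lambda_1 \ne \lambda_2$, this would force every bracket $[X_1,X_2]$ to have zero projection onto $(\fr{m}_1\oplus \fr{m}_2)^{\perp}$, contradicting the assumption on $[\fr{m}_1,\fr{m}_2]$.

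For part \textbf{2}, I would first apply part 1 to the pair $(\fr{m}_1,\fr{m}_2)$: since the $\fr{m}_i$ are pairwise $B$-orthogonal one has $\fr{m}_3 \subseteq (\fr{m}_1\oplus\fr{m}_2)^{\perp}$, so the hypothesis directly supplies the needed non-zero projection and gives $\lambda_1 = \lambda_2$. To involve $\lambda_3$, I would invoke the $\op{Ad}$-invariance of $B$: if $B([X_1,X_2],X_3) \ne 0$ for a suitable triple, then by associativity $B([X_3,X_1],X_2) \ne 0$ as well, so $[\fr{m}_3,\fr{m}_1]$ has non-zero projection on $\fr{m}_2 \subseteq (\fr{m}_1 \oplus \fr{m}_3)^{\perp}$. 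A second application of part 1 to the pair $(\fr{m}_1,\fr{m}_3)$ then yields $\lambda_1 = \lambda_3$ and finishes the proof.

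The cleanest feature of the argument is the projection trick that annihilates the unknown element $a$; the only minor subtlety is passing from a non-trivial projection of the whole span $[\fr{m}_1,\fr{m}_2]$ to the existence of a single bracket $[X_1,X_2]$ with non-trivial projection, which is immediate by linearity. I expect the use of associativity of $B$ in part 2, to transport the projection hypothesis from one pair of modules to another, to be the one conceptual step beyond routine bookkeeping.
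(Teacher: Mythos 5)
The paper does not prove this lemma; it is quoted from \cite{So1} without proof. Your argument is correct and is essentially the standard one used there: apply Proposition~\ref{GOCond} to $X=X_1+X_2$, use $\op{ad}(\fr{h})$-invariance to confine $[a,X_i]$ to $\fr{m}_i$, and project onto $(\fr{m}_1\oplus\fr{m}_2)^{\bot}$ to isolate $(\lambda_2-\lambda_1)[X_1,X_2]$; the transport of the hypothesis via $\op{ad}$-skew-symmetry of $B$ in part \textbf{2} is likewise the expected step. No gaps.
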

Finally, the following result will be used  in relating  
g.o. metrics on a space $G/H$ to  g.o. metrics on its universal covering space $\widetilde{G}/\widetilde{H}$ (see also \cite{So3}).
Recall that a $G$-invariant metric is called {\it standard} if it is induced by the negative of the Killing form on $\fr{g}$.

\begin{prop}\label{UnivCover}
Let $G/H$, $\widetilde{G}/\widetilde{H}$ be homogeneous spaces with $G$ compact and $\widetilde{H}$ connected, such that the Lie algebras of $G$ and $\widetilde{G}$ coincide and the Lie algebras of $H$ and $\widetilde{H}$ coincide. If every ($\widetilde{G}$-invariant) g.o. metric on $\widetilde{G}/\widetilde{H}$ is normal (resp. standard), then every ($G$-invariant) g.o. metric on $G/H$ is also normal (resp. standard).  Moreover, the converse is true if $H$ is connected.
\end{prop}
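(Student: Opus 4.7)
The plan is to identify $G$-invariant metrics on $G/H$ with $\widetilde G$-invariant metrics on $\widetilde G/\widetilde H$ via a common metric endomorphism on $\fr m$, and to transfer both the geodesic orbit condition and the normality condition across this correspondence, exploiting that both conditions are captured at the Lie-algebraic level shared by the two spaces.

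I would fix an $\op{Ad}(G)$-invariant inner product $B$ on $\fr g$ (available since $G$ is compact) together with the $B$-orthogonal reductive decomposition $\fr g=\fr h\oplus\fr m$, which serves both spaces because the Lie algebras agree. A $G$-invariant metric on $G/H$ is then determined by a $B$-symmetric, positive-definite, $\op{Ad}(H)$-equivariant endomorphism $A:\fr m\to\fr m$. Such an $A$ is in particular $\op{ad}(\fr h)$-equivariant, and since $\widetilde H$ is connected with Lie algebra $\fr h$, this is equivalent to $\op{Ad}(\widetilde H)$-equivariance; hence the same $A$ defines a $\widetilde G$-invariant metric $\widetilde g$ on $\widetilde G/\widetilde H$. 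The geodesic orbit condition from Proposition \ref{GOCond}, namely that for every $X\in\fr m$ there exists $a\in\fr h$ with $[a+X,AX]=0$, depends only on $\fr g$, $\fr h$, $\fr m$ and the Lie bracket, so $A$ is g.o.\ on $G/H$ if and only if it is g.o.\ on $\widetilde G/\widetilde H$.

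Suppose now every g.o.\ metric on $\widetilde G/\widetilde H$ is normal (resp.\ standard), and let $g\leftrightarrow A$ be g.o.\ on $G/H$. Then $\widetilde g$ is g.o., so by hypothesis $A$ is the restriction to $\fr m$ of an $\op{Ad}(\widetilde G)$-invariant inner product on $\fr g$ (resp.\ of the negative of the Killing form). This inner product is $\op{ad}(\fr g)$-invariant, hence $\op{Ad}(G^0)$-invariant, and one then upgrades this to full $\op{Ad}(G)$-invariance, showing $g$ is normal (resp.\ standard). For the converse when $H$ is connected, $\op{Ad}(H)$-equivariance of $A$ itself collapses to $\op{ad}(\fr h)$-equivariance, so the correspondence $A\leftrightarrow\widetilde A$ becomes bijective and the whole argument runs in reverse. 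The delicate step I expect is precisely the $\op{Ad}(G)$-invariance upgrade when $G$ is disconnected, which amounts to verifying invariance under the finite component group $G/G^0$ acting on $\fr g$; this is automatic in the standard case since the Killing form is intrinsic to $\fr g$, while in the general normal case it follows from the compatibility built into the $G$-invariance of the original metric $g$.
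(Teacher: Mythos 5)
Your argument is essentially the paper's proof: fix one $B$-orthogonal reductive decomposition, observe that the metric endomorphism transfers between the two spaces because $\op{ad}(\fr{h})$-equivariance is equivalent to $\op{Ad}(\widetilde{H})$-equivariance for connected $\widetilde{H}$, note that the g.o.\ criterion of Proposition \ref{GOCond} is purely Lie-algebraic and hence shared, and then pull normality (resp.\ standardness) back through the common endomorphism, with the converse following by symmetry when $H$ is connected. The only divergence is that you flag the upgrade from $\op{Ad}(G^0)$- to $\op{Ad}(G)$-invariance for disconnected $G$ as a delicate point and only sketch its resolution; the paper's own proof does not discuss this at all, so you are no less rigorous than the published argument there.
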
 
\begin{proof} Let $\fr{g}$ denote the Lie algebra of the groups $G$ and $\widetilde{G}$, and let $\fr{h}$ denote the Lie algebra of the groups $H$ and $\widetilde{H}$.  Let $B$ be an $\op{Ad}$-invariant inner product on $\fr{g}$ and consider the $B$-orthogonal decomposition $\fr{g}=\fr{h}\oplus \fr{m}_B$. Then $\fr{m}_B$ can be identified with the tangent spaces $T_o(G/H)$ and $T_o(\widetilde{G}/\widetilde{H})$. Let $g$ be a $G$-invariant g.o. metric on $G/H$.  We will prove that $g$ is normal.  The proof will be completed in three steps. 

\textbf{Step 1.} \emph{The $G$-invariant metric $g$ on $G/H$ induces a $\widetilde{G}$-invariant metric $\widetilde{g}$ on $\widetilde{G}/\widetilde{H}$}. Indeed, let $A_B:\fr{m}_{B}\rightarrow \fr{m}_{B}$ be the corresponding metric endomorphism of $g$. The endomorphism $A_B$ is $\op{Ad}_H$-invariant and hence $\op{ad}_{\fr{h}}$-equivariant.  Given that $\widetilde{H}$ is connected, the $\op{ad}_{\fr{h}}$-equivariance of $A_B$ yields its $\op{Ad}_{\widetilde{H}}$-equivariance.  Therefore, $A_B$ defines a $\widetilde{G}$-invariant metric $\widetilde{g}$ on $\widetilde{G}/\widetilde{H}$.     

\textbf{Step 2.} \emph{The metric $\widetilde{g}$ is also a g.o. metric on $\widetilde{G}/\widetilde{H}$.}  Indeed, since $g$ is a go. metric then the corresponding metric endomorphism $A_B$ satisfies Proposition \ref{GOCond}. Since $A_B$ is also the metric endomorphism of $\widetilde{g}$, Proposition \ref{GOCond} implies that $\widetilde{g}$ is a g.o. metric on $\widetilde{G}/\widetilde{H}$. 

\textbf{Step 3.} \emph{The metric $g$ is normal.} Indeed, since $\widetilde{g}$ is a g.o. metric on $\widetilde{G}/\widetilde{H}$, by hypothesis it is also normal. Hence, there exists an $\op{Ad}$-invariant inner product $B^{\prime}$ on $\fr{g}$ and a $B^{\prime}$-orthogonal decomposition $\fr{g}=\fr{h}\oplus \fr{m}_{B^{\prime}}$ such that the corresponding metric endomorphism $A_{B^{\prime}}:\fr{m}_{B^{\prime}}\rightarrow \fr{m}_{B^{\prime}}$ of $\widetilde{g}$ satisfies $A_{B^{\prime}}=\lambda\op{Id}$, $\lambda>0$.  Since $A_{B^{\prime}}$ coincides with the metric endomorphism of $g$, we conclude that $g$ is also normal.  
\end{proof} 
\begin{corol}\label{UnivCover1}If every ($\widetilde{G}$-invariant) g.o. metric on the universal cover $\widetilde{G}/\widetilde{H}$ of $G/H$ is normal (resp. standard), then every ($G$-invariant) g.o. metric on $G/H$ is also normal (resp. standard).
\end{corol}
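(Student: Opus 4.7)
The plan is to derive Corollary \ref{UnivCover1} as an immediate specialization of Proposition \ref{UnivCover}. The only real content is to verify that the data associated with the universal cover of $G/H$ fits the hypotheses of that proposition; once this is done, there is nothing left to prove.

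Concretely, I would first recall the construction of the universal cover of a homogeneous space $G/H$ with $G$ compact. Let $p:\widetilde{G}\to G$ be the universal cover of $G$, which is a Lie group with the same Lie algebra $\fr g$ as $G$. Set $\widetilde{H}$ to be the identity component of $p^{-1}(H)$; then $\widetilde{H}$ is a closed connected subgroup of $\widetilde{G}$ whose Lie algebra coincides with $\fr h$, and the natural map $\widetilde{G}/\widetilde{H}\to G/H$ is a covering map with simply connected total space, i.e.\ the universal cover of $G/H$.

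With this choice of $(\widetilde{G},\widetilde{H})$, all three hypotheses of Proposition \ref{UnivCover} are satisfied: $\widetilde{H}$ is connected by construction, the Lie algebras of $G$ and $\widetilde{G}$ coincide, and the Lie algebras of $H$ and $\widetilde{H}$ coincide. Therefore, assuming every $\widetilde{G}$-invariant g.o. metric on $\widetilde{G}/\widetilde{H}$ is normal (resp. standard), Proposition \ref{UnivCover} directly yields that every $G$-invariant g.o. metric on $G/H$ is normal (resp. standard).

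The ``main obstacle'' here is essentially bookkeeping rather than substance: one must be careful to take the identity component of $p^{-1}(H)$ (so that $\widetilde{H}$ is connected and the lemma applies) rather than $p^{-1}(H)$ itself, and to note that this prescription preserves the Lie algebras of both the acting group and the isotropy. Once this point is settled, the corollary is a direct reduction to Proposition \ref{UnivCover} and no further calculation is required.
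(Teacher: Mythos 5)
Your proof is correct and matches the paper's intended argument: the corollary is exactly the specialization of Proposition \ref{UnivCover} to $\widetilde{G}$ the universal cover of $G$ and $\widetilde{H}$ the identity component of the preimage of $H$, which is connected and shares its Lie algebra with $H$. The paper states the corollary without a separate proof for precisely this reason, and your careful verification of the hypotheses (in particular the choice of identity component to guarantee connectedness and simple connectivity of $\widetilde{G}/\widetilde{H}$) is the right bookkeeping.
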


\section{The space $M=G/H=\SO(n)/\SO(n_1)\times\cdots\times\SO(n_s)$, $n_1+\cdots +n_s\leq n$, $n_j>1$.}\label{O(n)}

\subsection{Isotropy representation of $G/H=\SO (n)/\SO(n_1)\times\cdots\times\SO(n_s)$}\label{isotropy1}

We set 
\begin{equation}\label{n_0}n_0:= n-(n_1+\cdots + n_s).\end{equation}
 We view $H = \SO(n_{1})\times\cdots\times\SO(n_{s})$ embedded diagonally in $\SO(n)$, so  that
$H\cong\begin{pmatrix}
\Id _{n_0} & 0 \\
 0 & H
 \end{pmatrix}. 
 $
 Hence
$$
\fr{h}=\begin{pmatrix}
0_{n_0} &  & & 0\\
   & \fr{so}(n_1) &  & \\
   &  & \ddots & \\
0   &  &  & \fr{so}(n_s)\\
\end{pmatrix},
$$
where $0_{n_0}$ is the $n_0\times n_0$ zero matrix.  We remark that the above embedding of $H$ is equivalent (via conjugation in $\SO(n)$) to any block-diagonal embedding of the factors $\SO(n_j)$.

Recall that if $\pi : G \to \Aut(V),$ \ $\pi' : G \to \Aut(W)$ are two representations of $G$, then for the second exterior power the following identity is valid: 
$
\wedge^{2}(\pi\oplus\pi') = \wedge^{2}\pi\oplus\wedge^{2}\pi'\oplus(\pi\otimes\pi').
$

 Denote by $\lambda_{n}: \SO(n)\to \Aut(\bb{R}^{n})$  the standard representation of $\SO(n)$.
 Then the adjoint representation $\Ad^{\SO(n)}$ of $\SO(n)$ (and $\Ad^{\OO(n)}$ of $\OO(n)$)  is equivalent to $\wedge^{2}\lambda_{n}$.
  
Let $\sigma _{n_i}: \SO(n_{1})\times\cdots\times\SO(n_{s})\to\SO(n_{i})$ be the projection onto the $i$-factor and $p_{i} = \lambda_{n_{i}}\circ\sigma_{n_{i}}$
 be the standard representation of $H$, i.e.  
$$ 
\SO(n_{1})\times\cdots\times\SO(n_{s})\stackrel{\sigma_{n_{i}}}{\longrightarrow}\SO(n_{i})\stackrel{\lambda_{n_{i}}}{\longrightarrow}\Aut(\bb{R}^{n_{i}}). 
$$
 Then we have:
\begin{eqnarray*}
\Ad^{G}\big|_{H} &=& \wedge^2\lambda _{n}\big|_{H} = \wedge^2(p_{1}\oplus\cdots\oplus p_{s} \oplus \mathbbm{1}_{n_0}) = \wedge^{2}p_{{1}}\oplus\wedge^{2}p_{{2}}\oplus\cdots\oplus\wedge^{2}p_{s}\oplus\wedge^{2}\mathbbm{1}_{n_0}\\
&&\oplus\{(p_{{1}}\otimes p_{{2}})\oplus\cdots\oplus(p_{{1}}\otimes p_{{s}})\}\oplus\{(p_{{2}}\otimes p_{{3}})\oplus\cdots\oplus (p_2\otimes p_{s})\} \oplus \cdots\oplus (p_{s-1}\otimes p_s) \\
&&
\oplus(p_{{1}}\otimes \mathbbm{1}_{n_0})\oplus(p_{{2}}\otimes \mathbbm{1}_{n_0})\oplus\cdots\oplus(p_{{s}}\otimes \mathbbm{1}_{n_0}),
\\
%&=& \wedge^{2}p_{l_{1}}\oplus\wedge^{2}p_{l_{2}}\oplus\wedge^{2}p_{l_{3}}\oplus\underbrace{1\oplus\cdots\oplus 1}_{\binom{n}{2}}\oplus(p_{l_{1}}\otimes p_{l_{2}})\oplus(p_{l_{1}}\otimes p_{l_{3}})\\
%&&\oplus(p_{l_{2}}\otimes p_{l_{3}}) \oplus\underbrace{p_{l_{1}}\oplus\cdots\oplus p_{l_{1}}}_{n}  \oplus\underbrace{p_{l_{2}}\oplus\cdots\oplus p_{l_{2}}}_{n}\oplus\underbrace{p_{l_{3}}\oplus\cdots\oplus p_{l_{3}}}_{n}.
\end{eqnarray*}
where $\wedge^{2}\mathbbm{1}_{n_0}$ is the sum of $\binom{n_0}{2}$  trivial representations. 

 The dimension of the representation  $\wedge^{2}p_{{1}}\oplus\wedge^{2}p_{{2}}\oplus\cdots\oplus\wedge^{2}p_{s}$ is $\binom{n_1}{2} + \binom{n_2}{2} + \cdots + \binom{n_s}{2}$ and is equal to the dimension of the adjoint representation of  $H = \SO(n_{1})\times\cdots\times\SO(n_{s})$, that is 
$\Ad ^H= \wedge^{2}p_{{1}}\oplus\wedge^{2}p_{{2}}\oplus\cdots\oplus\wedge^{2}p_{s}$. 
Therefore, by Proposition \ref{isotrepr}
the isotropy representation of $G/H$ is given by
\begin{eqnarray}\label{isotropyOO}
\chi &=& \wedge^{2}\mathbbm{1}_{n_0}\oplus (p_{{1}}\otimes p_{{2}})\oplus\cdots\oplus(p_{{1}}\otimes p_{{s}})\oplus (p_{{2}}\otimes p_{{3}})\oplus\cdots\oplus (p_2\otimes p_{s}) \oplus\cdots\oplus (p_{s-1}\otimes p_s)
 \nonumber\\
&& \oplus(p_{{1}}\otimes \mathbbm{1}_{n_0})\oplus(p_{{2}}\otimes \mathbbm{1}_{n_0})\oplus\cdots\oplus(p_{{s}}\otimes \mathbbm{1}_{n_0}).
\end{eqnarray}
 The dimension of each of $p_i\otimes p_j$ is $n_i\times n_j$ and each of $p_i\otimes \mathbbm{1}_{n_0}$, $i = 1,2,\ldots, s$, contains $n_0$ equivalent representations of dimension $n_i$.   
 
 \begin{remark}\label{SO(4)}
 \textnormal{ Each of the representations $p_i\otimes p_j$ corresponds to the isotropy representation of the Grassmannian $\SO(n_i+n_j)/\SO(n_i)\times \SO(n_j)$ and is irreducible unless $n_i=n_j=2$.  If $n_i=n_j=2$, then the summand $p_i\otimes p_j$ in (\ref{isotropyOO}) reduces into two 2-dimensional irreducible non-equivalent representations.   This summand corresponds to the isotropy representation of the Grassmannian $\SO(4)/\SO(2)\times\SO(2)$.  However, the reducibility does not carry to the Grassmannian $\OO(4)/\OO(2)\times\OO(2)$, which is isotropy irreducible.    Therefore, even if the isotropy subgroup $\OO(n_1)\times\cdots\times\OO(n_s)$ contains at least two $\OO(2)$-factors, the isotropy representation of $\OO(n)/\OO(n_1)\times\cdots \times\OO(n_s)$ is still (\ref{isotropyOO}).} 
 \end{remark}

Expression (\ref{isotropyOO}) induces a decomposition of the tangent space $\fr{m}$ of $G/H$  as 
\begin{equation}\label{decompOO}
\fr{m} = \fr{n}_1\oplus\cdots\oplus\fr{n}_{\binom{n_0}{2}} \bigoplus_{1\le i<j\le s}\fr{m}_{ij}\bigoplus_{j=1}^{s}\fr{m}_{0j},
\end{equation}
where $\dim(\fr{n}_i) = 1$, $\fr{m}_{0j} = \fr{m}_1^{j}\oplus\fr{m}_2^{j}\oplus\cdots\oplus\fr{m}_{n_0}^{j}$ with $\fr{m}_{\al}^{j}\cong\fr{m}_{\beta}^{j}$, $\al \neq \beta$ and $\dim(\fr{m}^{j}_{\ell})= n_j$, $\ell=1,2,\ldots,n_0$.
Note that $\fr{n}_1\oplus\cdots\oplus\fr{n}_{\binom{n_0}{2}}\cong\fr{so}(n_0)$. Moreover, if $n_0=0$ or $n_0=1$ then there are no trivial submodules and hence $\fr{so}(n_0)=\{0\}$.

We now give explicit matrix representations of the modules $\fr{m}_{ij}$ and $\fr{m}_{0j}$. We consider the $\Ad(\SO(n))$-invariant inner product $B:\fr{so}(n)\times\fr{so}(n)\to\mathbb{R}$ given by
\begin{equation}\label{KillSO(n)}
B(X, Y)=-{\rm Trace}(XY), \quad X, Y\in\fr{so}(n),
\end{equation}
and obtain 
a $B$-orthogonal decomposition 
$\fr{g}=\fr{h}\oplus \fr{m}$,
 where $\fr{h}=\fr{so}(n_1)\oplus \cdots \oplus \fr{so}(n_s)$ and $\fr{m}\cong T_o(G/H)$.  
 %We note that $B$ is a multiple of the Killing form of $\fr{o}(n)$.  
 We consider a basis of $\fr{g}=\fr{so}(n)$ as follows:\\
Let $M_{n}\mathbb R$ be the set of real $n\times n$ matrices and let $E_{ab}\in M_{n}\mathbb R$ be the matrix with 1 in the $(a,b)$-entry and zero elsewhere.  For $1\le a<b\le n$ we set

\begin{equation}\label{oe}e_{ab}:=E_{ab}-E_{ba}.
\end{equation} 
 Note that $e_{ab}=-e_{ba}$.
The set 

\begin{equation}\label{obasis}\mathcal{B}:=\left\{ {e_{ab}: 1\le a<b\le n} \right\},\end{equation}  

\noindent constitutes a basis of $\fr{so}(n)$, which is orthogonal with respect to $B$.
The proof of the following lemma is immediate.

\begin{lemma}\label{rel}
The only non zero bracket relations among the vectors {\rm (\ref{oe})} are
$[e_{ab}, e_{bc}]=e_{ac}$, for  $a,b,c$ distinct.
\end{lemma}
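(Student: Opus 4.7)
The plan is to verify the lemma by direct matrix computation. I would start from the elementary identity $[E_{ij}, E_{k\ell}] = \delta_{jk}E_{i\ell} - \delta_{\ell i}E_{kj}$ for the matrix units in $M_n\mathbb{R}$ and expand, by bilinearity,
\[
[e_{ab}, e_{cd}] = [E_{ab}, E_{cd}] - [E_{ab}, E_{dc}] - [E_{ba}, E_{cd}] + [E_{ba}, E_{dc}],
\]
so that each of the four summands reduces to a linear combination of matrix units weighted by Kronecker deltas.

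Next I would perform a case analysis on the intersection $\{a,b\} \cap \{c,d\}$, keeping the ordering convention $a<b$, $c<d$ in mind. If $\{a,b\} \cap \{c,d\} = \emptyset$, then every Kronecker delta appearing above vanishes, and so $[e_{ab}, e_{cd}] = 0$. If $\{a,b\} = \{c,d\}$, the orderings force $(a,b) = (c,d)$, and the bracket vanishes trivially. Hence the only configuration that can produce a non-zero bracket is the one in which exactly one index coincides.

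Using the antisymmetry $e_{ji} = -e_{ij}$, every sub-case of a single shared index can be transformed, up to a sign change, to the canonical configuration $[e_{ab}, e_{bc}]$ with $a, b, c$ pairwise distinct. A short direct calculation in that configuration gives $[e_{ab}, e_{bc}] = E_{ac} - E_{ca} = e_{ac}$, which is exactly the relation in the statement. The argument is entirely routine; the only mild care needed is the sign bookkeeping when reducing the four possible shared-index sub-cases to the canonical form $[e_{ab}, e_{bc}]$.
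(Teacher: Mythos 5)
Your proposal is correct, and the paper itself gives no argument — it simply labels the proof of Lemma~\ref{rel} as ``immediate.'' The routine verification you describe is exactly what is implicitly intended: expand $[e_{ab},e_{cd}]$ via the identity $[E_{ij},E_{k\ell}]=\delta_{jk}E_{i\ell}-\delta_{\ell i}E_{kj}$, observe that two disjoint (or coinciding) index pairs force all the deltas to vanish, and reduce every single-shared-index configuration by the sign rule $e_{ji}=-e_{ij}$ to the canonical computation $[e_{ab},e_{bc}]=E_{ac}-E_{ca}=e_{ac}$. Your case analysis is exhaustive and the sign bookkeeping goes through, so the argument stands as written.
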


A choice for the modules in the decomposition (\ref{decompOO}) is the following:
\begin{eqnarray*}
\fr{m}_{ij}&=&\Span\{e_{ab}:  n_0+n_1+\cdots +n_{i-1}+1\le a\le 
n_0+n_1+\cdots +n_i, \\
 && \ \ \ \ \ \ \ \ \ \ \ \quad  n_0+n_1+\cdots +n_{j-1}+1\le b\le n_0+n_1+\cdots +n_j\}, \ \ 1\leq i<j\leq s.\\
\fr{m}_{0j}&=&\Span\{e_{ab}: 1\le a\le n_0,\  n_0+n_1+\cdots +n_{j-1}+1\le b\le 
n_0+n_1+\cdots +n_j\}, \ \ 1\leq j\leq s. \\
\fr{so}(n_0) &=& \Span\{e_{ab}: 1\le a<b\le n_0\}.
\end{eqnarray*}
The equivalent modules in the decomposition of $\fr{m}_{0j}$ are given by
$$
\fr{m}^j_\ell =\Span\{e_{\ell b}: n_0+n_1+\cdots +n_{j-1}+1\le b\le 
n_0+n_1+\cdots +n_j \}, \ \ell = 1, \dots , n_0.
$$
Also,
$$
\fr{so}(n_j)=\Span\{e_{ab}: n_0+n_1+\cdots +n_{j-1}+1\le a<b\le n_0+n_1+\cdots +n_j\}, \quad j =1, \dots , s.
$$
Hence, we obtain the $B$-orthogonal decomposition
\begin{equation}\label{dd}\fr{m}=\fr{n}\oplus \fr{p},
\end{equation}
where
$$\fr{n}=\fr{so}(n_0), \quad \fr{p}=\bigoplus_{1\le i<j\le s}\fr{m}_{ij}\bigoplus_{j=1}^{s}\fr{m}_{0j}.
$$
The above decomposition can be depicted in the following matrix, which shows the upper triangular part of $\fr{so}(n)$:

$$
\begin{pmatrix}
\fr{so}(n_0) & \fr{m}_{01} & \fr{m}_{02} & \fr{m}_{03} & \cdots & \fr{m}_{0s}\\
        &  0_{n_1} & \fr{m}_{12} & \fr{m}_{13} & \cdots & \fr{m}_{1s}\\
    &   & 0_{n_2}  & \fr{m}_{23} & \cdots & \fr{m}_{2s}\\
   & \ast  &      & \ddots & \vdots & \vdots\\
   &   &      &  &  & 0_{n_s}     
\end{pmatrix}
$$
The matrices $\fr{m}_{0j}$ are of size $n_0\times n_j$, the matrices $\fr{m}_{ij}$ are of size $n_i\times n_j$, and the matrices $\fr{so}(n_i)$ of size $n_i\times n_i$. We remark that if $n_0=0$ or $n_0=1$, then $\fr{n}=\{0\}$.  In the former case, the submodules $\fr{m}_{0j}$ are zero while in the latter case they are non zero and irreducible.  

Moreover, using Lemma \ref{rel}, we observe that 
\begin{equation}\label{temp1}  
[\fr{so}(n_i),\fr{m}_{lm}]=\left\{ 
\begin{array}{lll}
\fr{m}_{lm},  \quad \makebox{if $i=l$ or $i=m$} \ \\

\{0\} ,   \quad \makebox{otherwise}
\end{array}
\right., \ \ \ 0\leq i\leq s, \ \ 0\leq l<m \leq s, 
\end{equation}
and 
\begin{equation}\label{SubmoduleBrackets}
[\fr{m}_{ij},\fr{m}_{jl}]=\fr{m}_{il} \ \ \makebox{for all} \ \ 0\leq i<j<l\leq s.
\end{equation}
\begin{remark}\label{SO(4)2}\textnormal{In view of Remark \ref{SO(4)}, if the isotropy subgroup $H=\SO(n_1)\times\cdots\times\SO(n_s)$ contains at least two $\SO(2)$-factors, say $n_i=n_j=2$, then the module $\fr{m}_{ij}$ splits into two $\Ad(H)$-irreducible non equivalent summands  each of dimension 2.
More precisely,  if we set  $I=n_0+n_1+\cdots +n_{i-1}+1$, $J=n_0+n_1+\cdots +n_{j-1}+1$, $K=n_0+n_1+\cdots +n_{i-1}+2$, $L=n_0+n_1+\cdots +n_{j-1}+2$, then
$\fr{m}_{ij}=V_{ij}^1\oplus V_{ij}^2$, where
$V_{ij}^1=\Span\{e_{IJ}-e_{KL}, e_{IL}+e_{KJ}\}$, $V_{ij}^2=\Span\{e_{IJ}+e_{KL}, e_{IL}-e_{KJ}\}$.
}

\smallskip
\textnormal{
For example, for $M=G/H=\SO(14)/\SO(3)\times\SO(2)\times\SO(3)\times\SO(2)\times\SO(2)$ we have $n_0=2, n_1=3, n_2=2, n_3=3, n_4=2, n_5=2$, and
$$
\fr{m}=\fr{n}\oplus\fr{p}=\fr{so}(2)\bigoplus_{1\le i<j\le 5}\fr{m}_{ij}\bigoplus_{j=1}^5\fr{m}_{0j}.
$$
The submodules $\fr{m}_{24}$, $\fr{m}_{25}$ and $\fr{m}_{45}$ split as above.  For instance, $\fr{m}_{24}=V_{24}^1\oplus V_{24}^2$, where
$V_{24}^1=\Span\{e_{6,11}-e_{7, 12}, e_{6,12}+e_{7,11}\}$, $V_{24}^2=\{e_{6,11}+e_{7, 12}, e_{6,12}-e_{7,11}\}$.
}
\end{remark}

\subsection{Proof of Theorem \ref{main1}}\label{proof1}

Any normal metric on $G/H$ induced from an $\Ad$-invariant inner product $B$ on $\fr{g}$ is a g.o. metric, and hence the sufficiency part of the theorem holds trivially.  For the necessity part, assume initially that $n_1=\cdots =n_s=2$ so that $H$ is abelian.  Since $G$ is semisimple, the main results in \cite{So2} imply that any g.o. metric on $G/H$ is normal, and hence Theorem \ref{main1} follows in this case. 

 Now assume that $n_j\neq 2$ for some $j=1,\dots,s$.  Let $g$ be a $G$-invariant g.o. metric on $G/H$.  Assume that $A:\fr{m}\rightarrow \fr{m}$ is the corresponding metric endomorphism satisfying Equation \eqref{MetEnd}.  Recall the spaces $\fr{n}=\fr{so}(n_0)$ and $\fr{p}=\bigoplus_{0\leq i<j\leq s}\fr{m}_{ij}$ defined in Section \ref{isotropy1}, and the decomposition $\fr{m}=\fr{n}\oplus \fr{p}$.  Since $H$ is connected, we have $H^0=H$.  The Lie algebra of the normalizer $N_G(H^0)=N_G(H)$ coincides with $\fr{n}_{\fr{g}}(\fr{h})=\{Y\in \fr{g}: [Y,\fr{h}]\subseteq \fr{h}\}$.  In our case, $\fr{h}=\fr{so}(n_1)\oplus \cdots \oplus \fr{so}(n_s)$ and it is not hard to show using the results in Section \ref{isotropy1} that
\begin{equation*}\fr{n}_{\fr{g}}(\fr{h})=\fr{so}(n_0)\oplus \fr{so}(n_1)\oplus \cdots \oplus \fr{so}(n_s)=\fr{n}\oplus \fr{h}.
\end{equation*}
 Therefore, the tangent space of $G/N_G(H)$ coincides with $\fr{p}$.  The normalizer Lemma \ref{NormalizerLemma} then implies that $\left.A\right|_{\fr{p}}$ defines a $G$-invariant g.o. metric on $G/N_G(H)$. 

 By taking into account Lemma \ref{rel} and the expressions of the subspaces $\fr{m}_{ij}, \fr{so}(n_j), \fr{so}(n_0)$ in terms of the basis $\mathcal{B}$, we deduce that the submodules $\fr{m}_{ij}$, $0\leq i<j\leq s$ are $\op{ad}(\fr{n}_{\fr{g}}(\fr{h}))$-invariant. If $n_i\neq 2$ or $n_j\neq 2$ then the submodules $\fr{m}_{ij}$ are $\op{ad}(\fr{n}_{\fr{g}}(\fr{h}))$-irreducible. On the other hand, in view of Remark \ref{SO(4)2}, $\fr{m}_{ij}=V_{ij}^1\oplus V_{ij}^2$ if $n_i=n_j=2$.  
 
We now claim that the $\op{ad}(\fr{n}_{\fr{g}}(\fr{h}))$-submodules $\fr{m}_{ij}$, $0\leq i<j\leq s$, are pairwise inequivalent.  Indeed, if $\fr{m}_{ij}$ and $\fr{m}_{lm}$, with $0\leq i<j\leq s$, $0\leq l<m\leq s$, are two distinct $\op{ad}(\fr{n}_{\fr{g}}(\fr{h}))$-submodules, then there exists  an index $i_0$ such that one of the following happens:

\smallskip
\noindent
 \centerline{{\bf Case 1.} $i_0=i$ or $i_0=j$ and $i_0\neq l,m$, \quad
{\bf Case 2.} $i_0=l$ or $i_0=m$ and $i_0\neq i,j$. }

\smallskip
\noindent For {\bf Case 1}, we take into account relation \eqref{temp1} and obtain that $[\fr{so}(n_{i_0}),\fr{m}_{ij}]=\fr{m}_{ij}$ and $[\fr{so}(n_{i_0}),\fr{m}_{lm}]=\{0\}$.
 For {\bf Case 2} we have that $[\fr{so}(n_{i_0}),\fr{m}_{ij}]=\{0\}$ and $[\fr{so}(n_{i_0}),\fr{m}_{lm}]=\fr{m}_{lm}$.  By virtue of Lemma \ref{EquivalentLemma}, we deduce that the submodules $\fr{m}_{ij}$ and $\fr{m}_{lm}$ are inequivalent in both cases, which proves the claim.

 Therefore, the restriction of the g.o. metric $\left.A\right|_{\fr{p}}$ of $G/N_G(H)$ to each of the $\op{ad}(\fr{n}_{\fr{g}}(\fr{h}))$-submodules $\fr{m}_{ij}$, $0\leq i<j<l\leq s$, has the diagonal form
\begin{equation}\label{Condition1}
\left.A\right|_{\fr{m}_{ij}}=\left\{ 
\begin{array}{lll}\lambda_{ij}\op{Id},  \qquad\qquad\qquad\qquad\ \ 
 \makebox{if $n_i\neq 2$ or $n_j\neq 2$}\\ \\
\begin{pmatrix} \lambda_{ij}^1\left.\op{Id}\right|_{V_{ij}^1} & 0\\
0& \lambda_{ij}^2\left.\op{Id}\right|_{V_{ij}^2}\end{pmatrix}, \ \ \makebox{if $n_i=n_j=2$}.\end{array}
\right.   
\end{equation}  
 In view of the decomposition $\fr{m}=\fr{n}\oplus \fr{p}$ and Remark \ref{conclusion}, Theorem \ref{main1} now follows from the following two propositions, whose proofs we present in the next subsection.

\begin{prop}\label{Simplif1}
$\left.A\right|_{\fr{p}}=\lambda\op{Id}$, where $\lambda>0$. 
\end{prop}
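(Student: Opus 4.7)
The plan is to show that all eigenvalues of $A|_{\fr{p}}$ appearing in \eqref{Condition1} — both the $\lambda_{ij}$'s and, when $n_i=n_j=2$, the split pair $\lambda^1_{ij}, \lambda^2_{ij}$ — coincide with a single positive scalar $\lambda$. The main tool will be Lemma \ref{EigenEq}(2), applied to triples of modules forming a ``triangle'' under the bracket; the relations \eqref{SubmoduleBrackets}, together with their natural analogues involving the index $0$ (for instance $[\fr{m}_{0i},\fr{m}_{0j}]=\fr{m}_{ij}$ when $0<i<j$, obtained directly from Lemma \ref{rel}), provide abundantly many such triangles linking every module $\fr{m}_{ij}$ to a chosen reference module.

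Concretely, fix $j_0 \in \{1,\dots,s\}$ with $n_{j_0}\neq 2$, available by hypothesis. Because $n_{j_0}\neq 2$, for every $i \in \{0,1,\dots,s\}\setminus\{j_0\}$ the module $\fr{m}_{i,j_0}$ (with the indices reordered if $i>j_0$) lies in the first branch of \eqref{Condition1}, so it is an eigenspace of $A$ with a single eigenvalue, call it $\lambda_{i,j_0}$. For any two distinct indices $i,j \in \{0,1,\dots,s\}\setminus\{j_0\}$, I apply Lemma \ref{EigenEq}(2) to the triangle $\{i,j,j_0\}$, with $\fr{m}_{i,j_0}$ and $\fr{m}_{j,j_0}$ as two of the modules and either $\fr{m}_{ij}$ (if irreducible) or one of its summands $V^1_{ij}, V^2_{ij}$ (if $n_i=n_j=2$, as in Remark \ref{SO(4)2}) as the third. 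The $\op{ad}(\fr{h})$-invariance and pairwise $B$-orthogonality hypotheses are clear, and the nonzero-projection hypothesis in the irreducible case is the relevant triangle bracket. In the split case the additional work is to verify the projection condition separately for $V^1_{ij}$ and $V^2_{ij}$: using the basis from Remark \ref{SO(4)2} and Lemma \ref{rel}, for $m$ an index in the $j_0$-th block one computes
$$
[\,e_{IJ} \pm e_{KL},\, e_{Jm}\,] \;=\; e_{Im},
$$
a nonzero element of $\fr{m}_{i,j_0}$; the other orderings of $i,j,j_0$ are analogous.

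Lemma \ref{EigenEq}(2) then yields $\lambda_{ij} = \lambda_{i,j_0} = \lambda_{j,j_0}$ in the irreducible case, and $\lambda^1_{ij} = \lambda^2_{ij} = \lambda_{i,j_0} = \lambda_{j,j_0}$ in the split case. Ranging $i,j$ over all pairs distinct from $j_0$ forces every $\lambda_{i,j_0}$ to agree with a single common value $\lambda > 0$, and the equalities above then show every remaining eigenvalue of $A|_{\fr{p}}$ equals $\lambda$ as well, giving $A|_{\fr{p}} = \lambda\,\mathrm{Id}$. Degenerate boundary cases (such as $s=1$, or $s=2$ with $n_0=0$) leave $\fr{p}$ as at most one module that is already $\op{ad}(\fr{n}_{\fr{g}}(\fr{h}))$-irreducible thanks to $n_{j_0}\neq 2$, so the conclusion is immediate there. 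The only technical subtlety is the $\SO(4)$-type splitting, where both summands $V^1_{ij}, V^2_{ij}$ must independently pass the nonzero-projection test; the explicit basis from Remark \ref{SO(4)2} makes this routine.
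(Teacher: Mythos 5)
Your proof is correct and takes a genuinely different---and noticeably leaner---route than the paper's. The paper proceeds in two steps: Step~1 establishes $\lambda_{ij}^1=\lambda_{ij}^2$ for the split $\SO(4)$-type modules, invoking a restriction lemma (Lemma~\ref{restr}) plus, when more than two $\SO(2)$-blocks are present, the external classification of g.o.\ metrics on spaces with abelian isotropy from \cite{So2}; Step~2 then uses Lemma~\ref{EigenEq}(2) on triangles $\{i,j,l\}$ together with the combinatorial induction of Lemma~\ref{CombinatorialLemma1} to collapse all the $\lambda_{ij}$'s to one value. You instead exploit a single reference index $j_0$ with $n_{j_0}\neq 2$: every $\fr{m}_{i,j_0}$ is then irreducible with a well-defined eigenvalue $\lambda_{i,j_0}$, and running Lemma~\ref{EigenEq}(2) over the triangles $\{i,j,j_0\}$---once for $\fr{m}_{ij}$ if irreducible, once for each of $V^1_{ij},V^2_{ij}$ if split---handles both Steps in a single pass. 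The ``star'' through $j_0$ replaces Lemma~\ref{CombinatorialLemma1} with transitivity of equality, and the unified treatment of the split summands makes the appeal to \cite{So2} and Lemma~\ref{restr} unnecessary. You are right that the only subtlety is checking the nonzero-projection hypothesis separately for $V^1_{ij}$ and $V^2_{ij}$; your computation $[e_{IJ}\pm e_{KL},e_{Jm}]=e_{Im}$ settles it. What the paper's machinery buys in exchange is an intermediate statement of independent interest (the restricted metric on the flag-type subquotient is itself g.o., hence standard), whereas your argument is more economical for the proposition at hand; it does lean on $n_{j_0}\neq 2$, but that is exactly the hypothesis in force where the proposition is invoked.
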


\begin{prop}\label{Simplif2}
$\left.A\right|_{\fr{n}}=\lambda\op{Id}$, where $\lambda$ is given by {\rm Proposition \ref{Simplif1}}.\qed
\end{prop}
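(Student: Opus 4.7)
The plan is to combine Lemma \ref{DualNormalizer} and Lemma \ref{GOLieGroups} to restrict the shape of $\left.A\right|_{\fr{n}}$ to a sum of scalar multiples of the identity on the simple/central ideals of $\fr{n}=\fr{so}(n_0)$, and then invoke Lemma \ref{EigenEq}\,\textbf{2} to force each such scalar to coincide with the $\lambda$ of Proposition \ref{Simplif1}. Since $H$ is connected, $H^0=H$, and the computation $\fr{n}_{\fr{g}}(\fr{h})=\fr{h}\oplus\fr{so}(n_0)$ performed in Section \ref{proof1} identifies the Lie algebra of $N_G(H)/H$ with $\fr{n}=\fr{so}(n_0)$. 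Thus $\left.A\right|_{\fr{n}}$ is a bi-invariant metric on $N_G(H)/H$ by Lemma \ref{DualNormalizer}, and Lemma \ref{GOLieGroups} yields: for $n_0\leq 1$ there is nothing to prove; for $n_0\geq 2$ and $n_0\neq 4$, $\left.A\right|_{\fr{n}}=\mu\op{Id}$ for some $\mu>0$; and for $n_0=4$, under the isomorphism $\fr{so}(4)\cong\fr{so}(3)_+\oplus\fr{so}(3)_-$, one has $\left.A\right|_{\fr{so}(3)_{\pm}}=\mu_{\pm}\op{Id}$.

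Assuming $n_0\geq 2$, I would fix any $j\in\{1,\dots,s\}$ and two distinct indices $1\leq \ell<\ell'\leq n_0$, and consider the $\op{ad}(\fr{h})$-irreducible submodules $\fr{m}_{\ell}^j,\fr{m}_{\ell'}^j\subset\fr{m}_{0j}$ introduced in Section \ref{isotropy1}. Proposition \ref{Simplif1} gives $\left.A\right|_{\fr{m}_{\ell}^j}=\left.A\right|_{\fr{m}_{\ell'}^j}=\lambda\op{Id}$, and a direct application of Lemma \ref{rel} produces
\begin{equation*}
[\fr{m}_{\ell}^j,\fr{m}_{\ell'}^j]=\Span\{e_{\ell\ell'}\}\subset\fr{n},
\end{equation*}
since $[e_{\ell b},e_{\ell' b'}]$ vanishes for $b\neq b'$ and equals $-e_{\ell\ell'}$ for $b=b'$. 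Moreover, $\Span\{e_{\ell\ell'}\}$ is trivially $\op{ad}(\fr{h})$-invariant, because $\fr{h}$ acts only on indices $>n_0$.

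For $n_0\neq 4$, I would take $\fr{m}_3:=\Span\{e_{\ell\ell'}\}$; then $\fr{m}_{\ell}^j,\fr{m}_{\ell'}^j,\fr{m}_3$ form a triple of pairwise $B$-orthogonal $\op{ad}(\fr{h})$-invariant submodules on which $A$ acts by $\lambda,\lambda,\mu$ respectively, and $[\fr{m}_{\ell}^j,\fr{m}_{\ell'}^j]$ has non-zero projection on $\fr{m}_3$, so Lemma \ref{EigenEq}\,\textbf{2} forces $\mu=\lambda$. The main obstacle will be the exceptional case $n_0=4$, where $\fr{so}(4)$ is not simple and $\left.A\right|_{\fr{n}}$ may carry two distinct eigenvalues $\mu_{\pm}$. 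I would resolve it by taking $\{\ell,\ell'\}=\{1,2\}$ and replacing $\fr{m}_3$ by the one-dimensional $\op{ad}(\fr{h})$-invariant subspaces $\Span\{e_{12}+e_{34}\}\subset\fr{so}(3)_+$ and $\Span\{e_{12}-e_{34}\}\subset\fr{so}(3)_-$, both invariant since $\fr{h}$ does not act on the indices $1,2,3,4$; the projection of $[\fr{m}_{1}^j,\fr{m}_{2}^j]=\Span\{e_{12}\}$ on each of them is non-zero, so applying Lemma \ref{EigenEq}\,\textbf{2} factor by factor yields $\mu_+=\lambda=\mu_-$, completing the proof.
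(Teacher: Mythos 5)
Your proof is correct, and its skeleton (Lemma \ref{DualNormalizer} and Lemma \ref{GOLieGroups} to pin down $\left.A\right|_{\fr{n}}$ up to the scalars $\mu$ or $\mu_{\pm}$, a separate treatment of $\fr{so}(4)$ via its two $\fr{so}(3)$-ideals, then Lemma \ref{EigenEq} to identify the scalars with $\lambda$) matches the paper's. The difference is purely in which bracket feeds Lemma \ref{EigenEq}: the paper brackets a vector of $\fr{n}$ (or of $\fr{n}_1,\fr{n}_2$ when $n_0=4$) against $e_{1,n_0+1}\in\fr{m}^1_1$ to land in $\fr{m}^1_2$, i.e.\ a component of $(\fr{n}\oplus\fr{m}^1_1)^\perp$, and so invokes part \textbf{1} of Lemma \ref{EigenEq}; you go the other way, bracketing $\fr{m}^j_\ell$ against $\fr{m}^j_{\ell'}$ to land in $\Span\{e_{\ell\ell'}\}\subset\fr{n}$ and invoking part \textbf{2}. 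Both directions exploit the same structural fact, that $\fr{n}\cong\fr{so}(n_0)$ acts on the ``row index'' $1,\dots,n_0$ of the modules $\fr{m}^j_\ell$. Your computation $[e_{\ell b},e_{\ell' b'}]=-\delta_{bb'}\,e_{\ell\ell'}$ is correct, as is the observation that $e_{\ell\ell'}$ projects nontrivially onto each of the one-dimensional $\op{ad}(\fr{h})$-invariant lines $\Span\{e_{12}\pm e_{34}\}$ when $n_0=4$. One cosmetic remark: in the regular case $n_0\neq 4$ you could equally well apply part \textbf{1} with $\fr{m}_1=\fr{m}^j_\ell$, $\fr{m}_2=\fr{m}^j_{\ell'}$ and skip introducing $\fr{m}_3$, but either reading of Lemma \ref{EigenEq} works. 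Neither route is shorter or more general; they are mirror images of the same argument.
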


\subsection{Proof of Propositions \ref{Simplif1} and \ref{Simplif2}}

\noindent To prove Proposition \ref{Simplif1}  we need the following lemmas.

\begin{lemma}\label{restr}
Let $A$ be the metric endomorphism of a g.o. metric on $M=\SO(n)/\SO(n_1)\times \cdots \times \SO(n_s)$.  Then for any $r=1,\dots,s$, the restriction of $A$ to the tangent space of $\widetilde{M}:=\SO(n_r+\cdots +n_s)/\SO(n_r)\times \cdots \times \SO(n_s)$ defines a $\SO(n_r+\cdots +n_s)$-invariant g.o. metric on $\widetilde{M}$.
\end{lemma}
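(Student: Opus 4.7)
The plan is to identify $\widetilde{M}=\SO(n_r+\cdots+n_s)/\SO(n_r)\times\cdots\times\SO(n_s)$ inside $M$ via the block embedding $\SO(n_r+\cdots+n_s)\hookrightarrow\SO(n)$ acting on the last $n_r+\cdots+n_s$ coordinates, so that the tangent space to $\widetilde{M}$ at the origin is naturally the subspace
$$\widetilde{\fr{m}}=\bigoplus_{r\le i<j\le s}\fr{m}_{ij}\subseteq\fr{m},$$
with isotropy algebra $\widetilde{\fr{h}}=\fr{so}(n_r)\oplus\cdots\oplus\fr{so}(n_s)\subseteq\fr{h}$. I would then show that $\widetilde{A}:=\left.A\right|_{\widetilde{\fr{m}}}$ is a well-defined $\op{Ad}(\widetilde{H})$-equivariant endomorphism of $\widetilde{\fr{m}}$ and that it satisfies the geodesic orbit criterion of Proposition \ref{GOCond} on $\widetilde{M}$.

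First I would check that $A$ preserves $\widetilde{\fr{m}}$. By the normalizer lemma $A$ commutes with $\op{ad}(\fr{n}_{\fr{g}}(\fr{h}))$, and from the discussion preceding Proposition \ref{Simplif1} the submodules $\fr{m}_{ij}$ for $0\le i<j\le s$ are $\op{ad}(\fr{n}_{\fr{g}}(\fr{h}))$-invariant and pairwise inequivalent. Combined with Remark \ref{conclusion} and the block form (\ref{Condition1}) on each $\fr{m}_{ij}$, this shows that $A\fr{m}_{ij}\subseteq\fr{m}_{ij}$ for every pair and in particular $A\widetilde{\fr{m}}\subseteq\widetilde{\fr{m}}$. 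Since $\widetilde{H}\subseteq H$ and $A$ is $\op{Ad}(H)$-equivariant, the restriction $\widetilde{A}$ is automatically $\op{Ad}(\widetilde{H})$-equivariant, hence defines a $\widetilde{G}$-invariant Riemannian metric on $\widetilde{M}$.

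It then remains to establish the g.o.\ condition for $\widetilde{A}$. Fix $X\in\widetilde{\fr{m}}$. Proposition \ref{GOCond} applied to $A$ on $M$ furnishes $a\in\fr{h}$ with $[a+X,AX]=0$ in $\fr{g}$. I would decompose $a=a_{<r}+\widetilde{a}$, with $a_{<r}\in\fr{so}(n_1)\oplus\cdots\oplus\fr{so}(n_{r-1})$ and $\widetilde{a}\in\widetilde{\fr{h}}$, and observe the key identity $[a_{<r},AX]=0$: by (\ref{temp1}), for every $i<r$ and every pair $r\le\ell<m\le s$ we have $[\fr{so}(n_i),\fr{m}_{\ell m}]=\{0\}$, and $AX$ lies in $\widetilde{\fr{m}}=\bigoplus_{r\le\ell<m\le s}\fr{m}_{\ell m}$. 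Consequently
$$[\widetilde{a}+X,\widetilde{A}X]=[a+X,AX]-[a_{<r},AX]=0,$$
and all three bracketed vectors lie in $\widetilde{\fr{g}}=\fr{so}(n_r+\cdots+n_s)$. Another application of Proposition \ref{GOCond}, now to the pair $(\widetilde{M},\widetilde{A})$, finishes the argument.

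I do not foresee a genuine obstacle: the statement is essentially bookkeeping on top of the submodule structure of Section \ref{isotropy1} and the normalizer-based block decomposition already established. The only point requiring care is observing that the ``wrong'' piece $a_{<r}$ of the witness $a$ acts trivially on $AX$ thanks to the vanishing brackets (\ref{temp1}); this is precisely what allows the g.o.\ witness on $M$ to restrict to a g.o.\ witness on $\widetilde{M}$.
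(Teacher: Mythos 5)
Your proof is correct and follows essentially the same route as the paper: both rely on relation (\ref{Condition1}) (equivalently, the normalizer lemma plus pairwise inequivalence of the $\fr{m}_{ij}$) to see that $A$ preserves $\widetilde{\fr{m}}=\bigoplus_{r\le i<j\le s}\fr{m}_{ij}$, and both invoke the vanishing brackets from (\ref{temp1}) to replace the g.o.\ witness $a\in\fr{h}$ by its $\widetilde{\fr{h}}$-component. Your decomposition $a=a_{<r}+\widetilde{a}$ merely makes explicit the paper's remark that $[\fr{h},\widetilde{\fr{m}}]=[\widetilde{\fr{h}},\widetilde{\fr{m}}]$ lets one assume $a\in\widetilde{\fr{h}}$.
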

\begin{proof} Let $\widetilde{\fr{g}}:=\fr{so}(n_r+\cdots +n_s)$ and $\widetilde{\fr{h}}:=\fr{so}(n_r)\oplus \cdots \oplus \fr{so}(n_s)$ be the Lie algebras of $\SO(n_r+\cdots +n_s)$ and $\SO(n_r)\times \cdots \times \SO(n_s)$ respectively.  Taking into account the notation and results of section \ref{isotropy1}, the tangent space of $\widetilde{M}$ coincides with the $B$-orthogonal complement $\widetilde{\fr{m}}:=\bigoplus_{r\leq i<j\leq s}\fr{m}_{ij}$ of $\widetilde{\fr{h}}$ in $\widetilde{\fr{g}}$.  Relation \eqref{Condition1} implies that $\left.A\right|_{\widetilde{\fr{m}}}$ defines an endomorphism of $\widetilde{\fr{m}}$, which is $\op{ad}(\fr{h})$-equivariant and hence $\op{ad}(\widetilde{\fr{h}})$-equivariant.  Therefore, $\left.A\right|_{\widetilde{\fr{m}}}$ defines an $\SO(n_r+\cdots +n_s)$-invariant metric on $\widetilde{M}$.  Since $A$ defines a g.o. metric on $G/H$, Proposition \ref{GOCond} implies that for any $X\in \widetilde{\fr{m}}$, there exists $a\in \fr{h}$ such that 
\begin{equation}\label{GOCond1} 0=[a+X,AX]=[a+X,\left.A\right|_{\widetilde{\fr{m}}}X].\end{equation}
 On the other hand, relation \eqref{temp1} along with the definition of $\widetilde{\fr{m}}$ imply that $[\fr{h},\widetilde{\fr{m}}]=[\widetilde{\fr{h}},\widetilde{\fr{m}}]$.  Therefore, we may assume that the vector $a$ in expression \eqref{GOCond1} lies in $\widetilde{\fr{h}}$.  By Proposition \ref{GOCond} we conclude that $\left.A\right|_{\widetilde{\fr{m}}}$ defines a g.o. metric on $\widetilde{M}$. \end{proof}

\begin{lemma}\label{CombinatorialLemma1}
Let $R_s=\{\lambda_{ij}: \ 0\leq i<j\leq s\}$ be a set such that $\lambda_{ij}=\lambda_{jk}=\lambda_{ik}$ for all $0\leq i<j<k\leq s$. Then $R_s$ is a  singleton.
\end{lemma}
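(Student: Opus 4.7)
The plan is to prove the lemma by showing that any two elements $\lambda_{ij}$ and $\lambda_{lm}$ of $R_s$ are equal, by connecting their index pairs through a short chain of triangles, along each of which the hypothesis forces equality of all three weights.

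First I would dispose of the trivial cases. If $s\le 1$, then $R_s$ has at most one element and there is nothing to prove; if $s=2$, then $R_s=\{\lambda_{01},\lambda_{02},\lambda_{12}\}$ and the hypothesis applied to the single triple $(0,1,2)$ gives equality directly. So the content is in $s\ge 3$. In that case, one may think of $R_s$ as an edge-weighting of the complete graph $K_{s+1}$ on $\{0,1,\dots,s\}$; the hypothesis says that every triangle in $K_{s+1}$ is monochromatic. I would then argue that any two edges belong to a common monochromatic triangle, or can be linked by at most two such triangles, so that transitivity forces $R_s$ to be a singleton.

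More precisely, fix pairs $(i,j)$ and $(l,m)$ with $i<j$ and $l<m$, and consider two cases. If $\{i,j\}\cap\{l,m\}\ne\emptyset$, then the three (at most) distinct indices arising from both pairs can be put in increasing order $a<b<c$, and by hypothesis the three weights on the triangle $(a,b,c)$ coincide, hence in particular $\lambda_{ij}=\lambda_{lm}$. If $\{i,j\}\cap\{l,m\}=\emptyset$, so that we have four distinct indices, I would reorder them as $a<b<c<d$ with $\{i,j\}=\{a,b\}$ or $\{a,c\}$ or $\{a,d\}$ and the like, and use the two triangles $(a,b,c)$ and $(a,c,d)$ (or $(b,c,d)$), which share an edge; each of them being monochromatic, transitivity through the shared edge yields $\lambda_{ij}=\lambda_{lm}$. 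In all of the finitely many sub-cases for the placement of the two pairs among $a<b<c<d$, this chain of at most two triangles connects them.

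The only thing to be careful about is respecting the ordering $i<j<k$ required in the hypothesis at each intermediate triple, which is a matter of relabeling the indices in increasing order before applying the hypothesis. No step is technically hard; the proof is a purely combinatorial consequence of the fact that the ``share-a-triangle'' relation on edges of $K_{s+1}$ generates, via transitivity, the universal relation as soon as $s\ge 2$.
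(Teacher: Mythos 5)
Your proposal is correct, but the route differs from the paper's. The paper argues by induction on $s$: it assumes $R_N=\{\lambda\}$ and then shows that each new value $\lambda_{i,N+1}$ must also equal $\lambda$ by applying the triangle hypothesis to the triple $(i,j,N+1)$ for some $j\leq N$. You instead give a direct argument: interpret $R_s$ as an edge-weighting of $K_{s+1}$ in which every triangle is monochromatic, then connect any two edges by a chain of at most two triangles sharing an edge, forcing equality. Both arguments hinge on the same elementary observation (propagating equality across a shared edge), and both require the same amount of care with index ordering. The inductive version is slightly more economical in bookkeeping since it only ever looks at triples of a fixed form $(i,j,N+1)$; your direct version avoids induction at the cost of a small case analysis on whether the two index pairs intersect, but makes the ``graph-theoretic'' content of the lemma more transparent. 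Either proof is fully rigorous.
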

 \begin{proof}  We will proceed by induction on $s\geq 1$.  If $s=1$ then $R_1=\{\lambda_{01}\}$ and the result holds trivially.  Assume that the lemma holds for $s=N$ and let $s=N+1$.  The assumption that $\lambda_{ij}=\lambda_{jk}=\lambda_{ik}$ for all $0\leq i<j<k\leq s=N+1$ implies that $\lambda_{ij}=\lambda_{jk}=\lambda_{ik}$ for all $0\leq i<j<k\leq N$. By the induction hypothesis, the set $R_N=\{\lambda_{ij}: \ 0\leq i<j\leq N\}$ is a singleton, i.e. $R_N=\{\lambda\}$.  

To prove that $R_{N+1}$ is a singleton, it remains to show that
\begin{equation}\label{remains1}\lambda_{0,N+1}=\lambda_{1,N+1}=\cdots =\lambda_{N,N+1}=\lambda.\end{equation}
 To this end, consider two arbitrary elements $\lambda_{i,N+1}$, $\lambda_{j,N+1}$.  Without loss of generality, assume that $i<j$.  Since $0\leq i<j<N$, we have $\lambda_{ij}\in R_N$ and hence $\lambda_{ij}=\lambda$.  On the other hand, the assumption of the lemma yields $\lambda=\lambda_{ij}=\lambda_{j,N+1}=\lambda_{i,N+1}$.  Since the choice of $i,j$ is arbitrary, Equation \eqref{remains1} is true and thus $R_{N+1}=\{\lambda\}$, which concludes the induction.\end{proof}

\noindent \emph{Proof of Proposition \ref{Simplif1}}. The proof consists of two steps.

\noindent
\textbf{Step 1.} Prove that both quantities $\lambda_{ij}^1$, $\lambda_{ij}^2$ in relation \eqref{Condition1} are equal for all $i,j$, which will imply that 
\begin{equation}\label{psad}\left.A\right|_{\fr{m}_{ij}}=\lambda_{ij}\op{Id}.\end{equation}
\noindent
\textbf{Step 2.}  Prove that all $\lambda_{ij}$ in relation \eqref{psad} are equal.
 
\smallskip
 For \textbf{Step 1}, recall the assumption that at least one of the $n_j$, $j=1,\dots ,s$ is not equal to $2$.  Recall also from definition \eqref{n_0} that $n_0=n-(n_1+\cdots +n_s)$. Without any loss of generality, we may apply a conjugation $\phi\in \op{Aut}(\SO(n))$, permuting the position of the diagonal blocks $\SO(n_j)$, $j=1,\dots,s$, in the embedding of $H$ in $G$, so that 
\begin{equation}\label{assum}n_j\neq 2 \ \ \makebox{for} \ \ j=0,\dots,r \ \ \makebox{and} \ \ n_{r+1}=n_{r+2}=\cdots =n_s=2, \ \ \makebox{i.e.} \end{equation}
\begin{equation*} \fr{n}_{\fr{g}}(\fr{h})=\fr{so}(n_0)\times \cdots \times \fr{so}(n_{r})\times \underbrace{\fr{so}(2)\times \cdots \times \fr{so}(2)}_{s-r \ \makebox{times}}, \ \ r=0,\dots, s.\end{equation*}
For example, any diagonal embedding of $H=\SO(2)\times \SO(3)\times \SO(3)$ in $\SO(10)$ is conjugate to 

\smallskip
$\begin{pmatrix}
\SO(3) & 0 & 0&0\\
0& \SO(3) & 0 & 0\\
0& 0 & \SO(2) & 0\\
0 &0 &0 & \op{Id}_{2}
\end{pmatrix}$, in which case $\fr{n}_{\fr{g}}(\fr{h})=\underbrace{\fr{so}(3)\oplus \fr{so}(3)\oplus \fr{so}(2)}_{\fr{h}}\oplus \underbrace{\fr{so}(2)}_{\fr{n}}$. 

\smallskip
\noindent Such a conjugation leaves the space $\fr{p}$ invariant by permuting the $\op{ad}(\fr{n}_{\fr{g}}(\fr{h}))$-submodules $\fr{m}_{ij}$, $0\leq i<j\leq s$. Under this conjugation and in the case where the number $s-r$ of $\SO(2)$-blocks is greater than $2$, relation \eqref{Condition1} becomes
\begin{equation}\label{Condition2}
\left.A\right|_{\fr{m}_{ij}}=\left\{ 
\begin{array}{lll}\lambda_{ij}\op{Id},  \ \ \makebox{if $0\leq i<j\leq r+1$}\\ \\
\begin{pmatrix} \lambda_{ij}^1\left.\op{Id}\right|_{V_{ij}^1} & 0\\
0& \lambda_{ij}^2\left.\op{Id}\right|_{V_{ij}^2}\end{pmatrix}, \ \ \makebox{if $r+1\leq i<j\leq s$}.\end{array}
\right.   
\end{equation} 
 If $s-r<2$ then relation \eqref{psad} is trivially true.  
 
 We now assume that $s-r\geq 2$ and consider the space $\widetilde{M}:=\SO(2(s-r))/\underbrace{\SO(2)\times \cdots \times \SO(2)}_{(s-r)-\makebox{times}}$, whose tangent space is $\widetilde{\fr{m}}:=\bigoplus_{r+1\leq i<j\leq s}\fr{m}_{ij}\subset \fr{p}$. By Lemma \ref{restr} the restriction $\left.A\right|_{\widetilde{\fr{m}}}$ defines a $\SO(2(s-r))$-invariant g.o. metric on $\widetilde{M}$. 
 
We consider the following cases: 
\begin{equation*} \makebox{{\bf Case I}}  \ \ s-r>2 \ \ \makebox{and \ {\bf Case II}} \ \ s-r=2.\end{equation*}
 
 In {\bf Case I}, it is  $\SO(2(s-r))\neq \SO(4)$ and hence $\SO(2(s-r))$ is simple. On the other hand, the isotropy subgroup $\underbrace{\SO(2)\times \cdots \times \SO(2)}_{(s-r)-\makebox{times}}$ of $\widetilde{M}$ is abelian.  By the main theorem in \cite{So2}, the g.o. metric $\left.A\right|_{\widetilde{\fr{m}}}$ is standard and thus $\left.A\right|_{\widetilde{\fr{m}}}=\lambda\left.\op{Id}\right|_{\widetilde{\fr{m}}}$.  In particular, the last relation implies that the quantities $\lambda_{ij}^1$, $\lambda_{ij}^2$ in relation \eqref{Condition1} are equal for all $i,j$ with $r+1\leq i<j\leq s$, and thus relation \eqref{psad} is true for {\bf Case I}.
 
 In {\bf Case II}, we have $\widetilde{M}=\SO(4)/\SO(2)\times \SO(2)$, $\widetilde{\fr{m}}=\fr{m}_{s-1,s}=V_{s-1,s}^1\oplus V_{s-1,s}^2$ and relation \eqref{Condition2} yields
  $\left.A\right|_{\widetilde{\fr{m}}}=\begin{pmatrix} \lambda_{s-1,s}^1\left.\op{Id}\right|_{V_{s-1,s}^1} & 0\\
0& \lambda_{s-1,s}^2\left.\op{Id}\right|_{V_{s-1,s}^2}\end{pmatrix}$.  We consider the $\op{ad}(\fr{n}_{\fr{g}}(\fr{h}))$-irreducible submodules $\fr{m}_{1,s-1}$, and $V_{s-1,s}^1$. 

\noindent Relation \eqref{SubmoduleBrackets} yields $[\fr{m}_{1,s-1},V_{s-1,s}^1]\subseteq [\fr{m}_{1,s-1},\fr{m}_{s-1,s}]=\fr{m}_{1,s}$. More importantly, along with the description of the submodules $V_{ij}^l$ in Remark \ref{SO(4)2}, we deduce that $[\fr{m}_{1,s-1},V_{s-1,s}^1]\subseteq \fr{m}_{1,s}\setminus \{0\}$.  As a result, the space $[\fr{m}_{1,s-1},V_{s-1,s}^1]$ has non zero projection on $(\fr{m}_{1,s-1}\oplus V_{s-1,s}^1)^{\bot}$.  By using the first part of Lemma \ref{EigenEq}, the last relation, along with the facts that $\left.A\right|_{\fr{m}_{1,s-1}}=\lambda_{1,s-1}\left.\op{Id}\right|_{\fr{m}_{1,s-1}}$ and $\left.A\right|_{V_{s-1,s}^1}=\lambda_{s-1,s}^1\left.\op{Id}\right|_{V_{s-1,s}^1}$, yield 
\begin{equation}\label{geom1}\lambda_{1,s-1}=\lambda_{s-1,s}^1.
\end{equation}
\noindent By using the same argument for $V_{s-1,s}^2$ we deduce that
  \begin{equation}\label{geom2}\lambda_{1,s-1}=\lambda_{s-1,s}^2\end{equation}
\noindent Equations \eqref{geom1} and \eqref{geom2} yield $\lambda_{s-1,s}^1=\lambda_{s-1,s}^2$ and thus relation \eqref{psad} is also true for {\bf Case II}.  This concludes \textbf{Step 1.}

For \textbf{Step 2} we proceed as follows: Due to relation \eqref{SubmoduleBrackets}, along with Equation \eqref{psad}, the $\op{ad}({\fr{h}})$-invariance of $\fr{m}_{ij},\fr{m}_{jl}$ and their $B$-orthogonality, part \textbf{2.} of Lemma \ref{EigenEq} yields 
\begin{equation*}\label{EqEigen1}\lambda_{ij}=\lambda_{jl}=\lambda_{il} \ \ \makebox{for all} \ \ 0\leq i<j< l\leq s.\end{equation*}
 \noindent By Lemma \ref{CombinatorialLemma1} we deduce that the set $R_s=\{\lambda_{ij}: \ 0\leq i<j\leq s\}$ of the eigenvalues of $\left.A\right|_{\fr{p}}$ has only one element, and thus \textbf{Step 2} is concluded.
 \qed 

\medskip
 Before we proceed to the proof of the second proposition, we note that if $n_0=0$ or $n_0=1$, then $\fr{n}=\{0\}$ and $\fr{m}=\fr{p}$, and thus Theorem \ref{main1} follows directly from Proposition \ref{Simplif1}.\\

\noindent \emph{Proof of Proposition \ref{Simplif2}}.  We recall the spaces $\fr{m}_{0j}=\fr{m}^j_1\oplus \cdots \oplus \fr{m}^j_{n_0}$ defined in Section \ref{isotropy1}.  
By Proposition \ref{Simplif1} we have
\begin{equation}\label{Res2}\left.A\right|_{\fr{m}^j_i}=\lambda\op{Id}, \ \ i=1,\dots, n_0.\end{equation}
The Lie algebra $\fr{n}=\fr{so}(n_0)$ coincides with the Lie algebra of $N_G(H^0)/H^0=N_G(H)/H$.  By Lemma \ref{DualNormalizer}, $\left.A\right|_{\fr{n}}$ defines a bi-invariant metric on $N_G(H)/H$, which in turn corresponds to an $\op{Ad}$-invariant inner product on $\fr{so}(n_0)$.  For $n_0=2$, $\fr{n}$ is one-dimensional and thus $\left.A\right|_{\fr{n}}=\mu\op{Id}$.  For $2< n_0\neq 4$, $\fr{so}(n_0)$ is simple and the only $\op{Ad}$-invariant inner product is a scalar multiple of the Killing form.  Therefore, if $2< n_0\neq 4$ we also have $\left.A\right|_{\fr{n}}=\mu\op{Id}$.  For both cases, choose the vectors $e_{12}\in \fr{n}$ and $e_{1,n_0+1}\in \fr{m}^1_1$.  
We have 
\begin{equation*}[e_{12},e_{1n_0+1}]=-e_{2,n_0+1}\in\fr{m}^1_2.\end{equation*}
 Therefore, $[\fr{n},\fr{m}_1^1]$ has non-zero projection on $(\fr{n}\oplus \fr{m}_1^1)^{\bot}$.  Along with the $\op{ad}(\fr{h})$-invariance of $\fr{n}$ and $\fr{m}_1^1$ and the facts that $\left.A\right|_{\fr{m}^j_i}=\lambda\op{Id}$ and $\left.A\right|_{\fr{n}}=\mu\op{Id}$, Lemma \ref{EigenEq} yields $\lambda=\mu$.  We conclude that $\left.A\right|_{\fr{so}(n_0)}=\lambda\op{Id}$ if $2\leq n_0\neq 4$.

If $n_0=4$, $\fr{n}$ decomposes into two simple ideals as 
\begin{equation*}\fr{n}=\fr{so}(4)=\fr{n}_1\oplus \fr{n}_2,\end{equation*}
 where $\fr{n}_1=\op{span}\{e_{12}+e_{34},-e_{13}+e_{24},e_{23}+e_{14}\}\approx \fr{so}(3)$ and $\fr{n}_2=\op{span}\{e_{12}-e_{34},-e_{13}-e_{24},e_{23}-e_{14}\}\approx \fr{so}(3)$.  
 By Lemma \ref{GOLieGroups} we have 
\begin{equation}\label{Res1}\left.A\right|_{\fr{n}_1}=\mu_1\op{Id}\ \ \makebox{and} \ \ \left.A\right|_{\fr{n}_2}=\mu_2\op{Id}.
\end{equation}
 It remains to show that $\mu_1=\mu_2=\lambda$.  To this end, choose the vectors $e_{12}+e_{34}\in \fr{n}_1$, $e_{12}-e_{34}\in \fr{n}_2$ and $e_{15}\in \fr{m}^1_1$. The submodules $\fr{n}_1,\fr{n}_2$ and $\fr{m}^1_1$ are $\op{ad}(\fr{h})$-invariant. Moreover, we have 
 \begin{equation*}\label{Res3}[e_{12}+e_{34}, e_{15}]=- e_{25} \in \fr{m}^1_2 \subset (\fr{n}_1\oplus \fr{m}^1_1)^{\bot}\ \ \makebox{and} \ \ [e_{12}-e_{34}, e_{15}]=- e_{25} \in \fr{m}^1_2 \subset(\fr{n}_2\oplus \fr{m}^1_1)^{\bot}.\end{equation*}
  Hence, $[\fr{n}_1,\fr{m}^1_1]$ and $[\fr{n}_2,\fr{m}^1_1]$ have non zero projections on $(\fr{n}_1\oplus \fr{m}_1^1)^{\bot}$ and $(\fr{n}_2\oplus \fr{m}^1_1)^{\bot}$ respectively.  Along with Equations \eqref{Res2} and \eqref{Res1}, part \textbf{1.} of Lemma \ref{EigenEq} yields $\mu_1=\lambda=\mu_2$, which concludes the proof.  \qed

\section{The space $M=G/H=\U(n)/\U(n_1)\times\cdots\times \U(n_s)$}
 
 \subsection{Isotropy representation of $G/H=\U(n)/\U(n_1)\times \cdots \times \U(n_s)$}\label{isotropy2}

Denote by $\mu_n$ the standard representation of $\U(n)$ in $\mathbb C^n$.  Then the complexified adjoint representation of $\U(n)$ is
\begin{equation}\label{st}\operatorname{Ad}^{\U(n)}\otimes \mathbb C=\mu_n \otimes_{\mathbb C} \bar{\mu}_n.\end{equation}
By taking into account the assumption that $H$ is embedded diagonally in $G$, we can identify $H$ with the subgroup
$$
H=\begin{pmatrix}
\operatorname{Id}_{n_0} &  & & 0\\
   & \U(n_1) &  & \vdots \\
   &  & \ddots & \\
0   & \cdots &  & \U(n_s)\\
\end{pmatrix},
$$

\noindent of $G$, where $n_0:=n-(n_1+\cdots +n_s)$.  

Let $\tau _{n_i}: \U(n_{1})\times\cdots\times\U(n_{s})\to\U(n_{i})$ be the projection onto the $i$-factor and $q_{i} = \mu_{n_{i}}\circ\tau_{n_{i}}$
 be the standard representation of $H$, i.e.  
$$ 
\U(n_{1})\times\cdots\times\U(n_{s})\stackrel{\tau_{n_{i}}}{\longrightarrow}\U(n_{i})\stackrel{\mu_{n_{i}}}{\longrightarrow}\Aut(\bb{C}^{n_{i}}). 
$$

By using relation \eqref{st}, we obtain 
\begin{eqnarray}
\left.\operatorname{Ad}^{G}\otimes \mathbb C\right|_H &=&\left.\mu_n \otimes_{\mathbb C} \bar{\mu}_n \right|_H=\left.\mu_n\right|_H\otimes_{\mathbb C} \left.\bar{\mu}_n\right|_H %\nonumber \\
=
({q_{1}\oplus\cdots \oplus q_s}\oplus \mathbbm{1}_{n_0})\otimes_{\mathbb C}({\bar{q}_{1}}\oplus\cdots\oplus {\bar{q}_{s}}\oplus\mathbbm{1}_{n_0})
\nonumber \\
&=&
\mathbbm{1}_{{n_0}^2}\ \bigoplus_{i=1}^s\left\{ {q_{i}\otimes_{\mathbb C}\bar{q}_{i}} \right\} \bigoplus_{j=1}^s \{(q_j\oplus\bar{q}_j)\otimes_{\mathbb C}\mathbbm{1}_{n_0}\}\nonumber \\
&&
\bigoplus_{1\le i<j\le s}\left\{(q_{i}\otimes_{\mathbb C}\bar{q}_{j})\oplus (q_{j}\otimes_{\mathbb C}\bar{q}_{i})\right\}.%\oplus \mathbbm{1}_{(n_0)^2}.
\label{rep2} 
\end{eqnarray}
The summand $\bigoplus_{i=1}^s\{{q_{i}\otimes_{\mathbb C}\bar{q}_{i}}\}$ in \eqref{rep2} corresponds to $\operatorname{Ad}^H\otimes \mathbb C$, therefore, by virtue of Proposition  \ref{isotrepr}, 
$\chi\otimes \mathbb C$ is given by 
\begin{eqnarray}
\chi\otimes \mathbb C&=&\mathbbm{1}_{n_0^2}\ 
\bigoplus_{j=1}^s \{(q_j\oplus\bar{q}_j)\otimes_{\mathbb C}\mathbbm{1}_{n_0}\}
\bigoplus_{1\le i<j\le s}\left\{(q_{i}\otimes_{\mathbb C}\bar{q}_{j})\oplus (q_{j}\otimes_{\mathbb C}\bar{q}_{i})\right\}.
\label{chi2} 
\end{eqnarray}
Expression \eqref{chi2} induces a {\it real} decomposition of the tangent space 
\begin{equation}\label{dd12}
\fr{m}=\fr{n}\oplus \fr{p},
\end{equation}
where
\begin{equation}\label{dd2}
  \fr{n}=\fr{u}(n_0),\ \ \ 
  \fr{p}= \bigoplus_{j=1}^s{\fr{m}_{0j}}\bigoplus_{1\leq i<j\leq s}\fr{m}_{ij}.
 \end{equation}
In the above decomposition we have 
$\fr{m}_{0j}=\fr{m}_1^j\oplus\fr{m}_2^j\oplus\cdots\oplus\fr{m}_{n_0}^j$ with
$\fr{m}_\alpha^j\cong\fr{m}_\beta^j$, $\alpha\ne\beta$ and $\dim(\fr{m}_\ell ^j)=2n_j, \ell =1,2,\dots ,n_0$.

We now give explicit matrix representations of the modules $\fr{m}_{0j}$ and $\fr{m}_{ij}$.
We consider the $\operatorname{Ad}(\U(n))$-invariant inner product $B:\fr{u}(n)\times \fr{u}(n)\rightarrow \mathbb R$ given by
\begin{equation}\label{inner2}B(X,Y)=-\operatorname{Trace}(XY), \quad X,Y\in \fr{u}(n).
\end{equation}
Then there is a $B$-orthogonal decomposition 
\begin{equation}\label{obt2}\fr{g}=\fr{h}\oplus \fr{m},
\end{equation}
 where $\fr{h}=\fr{u}(n_1)\oplus \cdots \oplus \fr{u}(n_s)$ and $\fr{m}\approx T_o(G/H)$.  
We consider a basis of $\fr{g}=\fr{u}(n)$ as follows:\\
Let $M_{n}\mathbb C$ be the set of complex $n\times n$ matrices and let $E_{ab}\in M_{n}\mathbb C$, $a,b=1,\dots,n$, be the matrix with  1 in the $(a,b)$-entry and zero elsewhere.  For $a,b=1,\dots,n$, we set
\begin{equation}\label{mel2}e_{ab}=E_{ab}-E_{ba},\quad f_{ab}=\sqrt{-1}(E_{ab}+E_{ba}).
\end{equation} 
Note that
$e_{ab}=-e_{ba}, {f}_{ab}={f}_{ba}$.
 Then the set 
\begin{equation}\label{set2}\mathcal{B}=\left\{ e_{ab}, {f}_{cd}:1\le a<b\le n ,\  1\le c \le d\le n \right\},
\end{equation}  
 constitutes a basis of $\fr{u}(n)$ which is orthogonal with respect to $B$.  We have the following.

\begin{lemma}\label{rel2}
The non zero bracket relations among the vectors \eqref{mel2} are given by
\begin{equation*}
\begin{array}{ccc}
 [e_{ab},e_{cd}]=\delta_{bc}e_{ad}-\delta_{ad}e_{cb}-\delta_{ac}e_{bd}-\delta_{bd}e_{ac},&
[{f}_{ab},e_{cd}]=\delta_{bc}{f}_{ad}-\delta_{ad}{f}_{cb}+\delta_{ac}{f}_{bc}-\delta_{bd}f_{ac},\\

\ \ \ [{f}_{ab}, {f}_{cd}]=-\delta_{bc}e_{ad}+\delta_{ad}e_{cb}-\delta_{ac}e_{bd}-\delta_{bd}e_{ac}.\end{array}
\end{equation*}
\end{lemma}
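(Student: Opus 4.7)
The lemma is a direct computation, so my plan is to reduce everything to the elementary commutator formula for matrix units and then expand bilinearly using the definitions of $e_{ab}$ and $f_{ab}$.

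First I would record the basic identity
\[
[E_{ab},E_{cd}] \;=\; E_{ab}E_{cd}-E_{cd}E_{ab} \;=\; \delta_{bc}E_{ad}-\delta_{ad}E_{cb},
\]
which is immediate from the multiplication rule $E_{ab}E_{cd}=\delta_{bc}E_{ad}$. Next, for each of the three brackets I would expand using the definitions \eqref{mel2}. For example, $[e_{ab},e_{cd}]$ splits into the four terms $[E_{ab},E_{cd}]-[E_{ab},E_{dc}]-[E_{ba},E_{cd}]+[E_{ba},E_{dc}]$; applying the matrix-unit identity to each and then collecting the resulting $E$'s into antisymmetric pairs $E_{xy}-E_{yx}$ recovers the $e$'s on the right-hand side. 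The computation of $[f_{ab},e_{cd}]$ is analogous but carries an overall factor $\sqrt{-1}$, and the sign pattern of the four terms causes the $E$'s to regroup into symmetric pairs $E_{xy}+E_{yx}$, which, together with the $\sqrt{-1}$, recover the $f$'s. Finally, $[f_{ab},f_{cd}]$ carries an overall factor $(\sqrt{-1})^2=-1$, and the four terms regroup into antisymmetric pairs, producing $e$'s as claimed.

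There is essentially no obstacle here beyond bookkeeping: one has to be careful with the signs and with the relations $e_{ab}=-e_{ba}$ and $f_{ab}=f_{ba}$ when putting the answer in the stated form (e.g.\ a term $\delta_{ad}e_{bc}$ may need to be rewritten as $-\delta_{ad}e_{cb}$). The claim that all unlisted brackets vanish is automatic from the same expansion, since every summand in each expansion carries a Kronecker delta, and when all four deltas vanish the bracket is zero.
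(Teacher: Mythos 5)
Your proposal is correct and follows exactly the paper's proof, which likewise records $[E_{ab},E_{cd}]=\delta_{bc}E_{ad}-\delta_{ad}E_{cb}$ and concludes by direct bilinear expansion. Your extra observation that every summand carries a Kronecker delta (so all unlisted brackets vanish) is a welcome bit of explicitness the paper leaves implicit.
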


\begin{proof}We observe that $[E_{ab},E_{cd}]=\delta_{bc}E_{ad}-\delta_{ad}E_{cb}$ and the lemma follows by direct computation.
\end{proof}

Then a choice of the modules in the decomposition \eqref{dd2} is the following:
\begin{eqnarray*}
\fr{m}_{0j}&=&\operatorname{span}\left\{ {e_{ab}, f_{cd}\in \mathcal{B}: 1 \leq a,c \leq n_0, \ n_0+n_1+\cdots+n_{j-1}+1\leq b,d \leq n_0+n_1+\cdots+n_{j}} \right\},\nonumber\\
\fr{m}_{ij}&=&\operatorname{span} \{ {e_{ab}, f_{cd}\in \mathcal{B}:n_0+n_1+\cdots+n_{i-1}+1\leq a,c \leq n_0+n_1+\cdots+n_i,} \nonumber
\\
&& n_0+n_1+\cdots+n_{j-1}+1\leq b,d \leq n_0+n_1+\cdots+n_j \}, \quad 1\leq i<j\leq s,\\
\fr{u}(n_0)&=&\Span\left\{e_{ab}, {f}_{cd}\in \mathcal{B}: 1\le a<b=n, \ 1\le c\le d\le n\right\}.
\end{eqnarray*}
The equivalent modules in the decomposition of $\fr{m}_{0j}$ are given by
$$
\fr{m}^j_\ell =\Span\{e_{\ell b}, f_{\ell d}\in \mathcal{B}: n_0+n_1+\cdots +n_{j-1}+1\le b,d\le 
n_0+n_1+\cdots +n_j \}, \ \ell = 1, \dots , n_0.
$$
Also,
$$
\fr{u}(n_j)=\Span\{e_{ab}, f_{cd}\in\mathcal{B}: n_0+n_1+\cdots +n_{j-1}+1\le a,b,c,d\le n_0+n_1+\cdots +n_j\}, \quad j =1, \dots , s.
$$
Then the $B$-orthogonal decompositions \eqref{dd12} and \eqref{dd2} can be depicted in the following matrix, which shows the upper triangular part of $\fr{u}(n)$:
$$
\begin{pmatrix}
\fr{u}(n_0) & \fr{m}_{01} & \fr{m}_{02} & \fr{m}_{03} & \cdots & \fr{m}_{0s}\\
        &  0_{n_1} & \fr{m}_{12} & \fr{m}_{13} & \cdots & \fr{m}_{1s}\\
    &   & 0_{n_2}  & \fr{m}_{23} & \cdots & \fr{m}_{2s}\\
   & \ast  &      & \ddots &  & \vdots\\
   &   &      &  &  & 0_{n_s}     
\end{pmatrix}
$$
The matrices $\fr{m}_{0j}$ are of size $n_0\times n_j$, the matrices $\fr{m}_{ij}$ are of size $n_i\times n_j$, and the matrices $\fr{u}(n_i)$ of size $n_i\times n_i$.

\subsection{Proof of Theorem \ref{main2}}\label{proof2}

Recall the spaces $\fr{n}$ and $\fr{p}$ defined in subsection \ref{isotropy2} and the decompositions \eqref{dd12} and \eqref{dd2}.   
We note that if $n_0=0$ (i.e. $n_1+\cdots +n_s=n$), then $\fr{n}=\{0\}$, and Theorem \ref{main2} follows directly from Proposition \ref{Simplif12} below. 
Assume henceforth that $n_0\geq 1$.
To prove Theorem \ref{main2} we need the following two propositions, which we  will prove at the end of the subsection.

\begin{prop}\label{Simplif12}
$\left.A\right|_{\fr{p}}=\lambda\op{Id}$, where $\lambda>0$. 
\end{prop}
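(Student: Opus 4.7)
The plan is to follow the template of Proposition \ref{Simplif1}. First I would identify the normalizer: a direct computation using Lemma \ref{rel2} and the matrix description of $\fr{h}$ in Section \ref{isotropy2} shows that
$$\fr{n}_{\fr{g}}(\fr{h})=\fr{u}(n_0)\oplus \fr{u}(n_1)\oplus \cdots \oplus \fr{u}(n_s)=\fr{n}\oplus \fr{h}.$$
Since $H$ is connected we have $H^0=H$, and the tangent space of $G/N_G(H)$ is exactly $\fr{p}$. By the normalizer lemma (Lemma \ref{NormalizerLemma}) the restriction $\left.A\right|_{\fr{p}}$ defines a $G$-invariant g.o. metric on $G/N_G(H)$, and it remains to show that this restriction is a scalar multiple of the identity.

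Next I would analyze the $\op{ad}(\fr{n}_{\fr{g}}(\fr{h}))$-module structure of $\fr{p}$. For $1\leq i<j\leq s$, the submodule $\fr{m}_{ij}$ is $\op{ad}(\fr{h})$-irreducible (it is the isotropy module of the complex Grassmannian $\U(n_i+n_j)/\U(n_i)\times \U(n_j)$) and stays irreducible under the larger algebra. For $1\leq j\leq s$, the submodule $\fr{m}_{0j}$ decomposes under $\op{ad}(\fr{h})$ into $n_0$ equivalent pieces $\fr{m}_1^j,\ldots,\fr{m}_{n_0}^j$; the extra factor $\fr{u}(n_0)\subseteq \fr{n}_{\fr{g}}(\fr{h})$ mixes these equivalent copies, which by a routine check using Lemma \ref{rel2} renders $\fr{m}_{0j}$ irreducible as an $\op{ad}(\fr{n}_{\fr{g}}(\fr{h}))$-module. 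Pairwise inequivalence of the submodules $\fr{m}_{ij}$, $0\leq i<j\leq s$, then follows from Lemma \ref{EquivalentLemma} exactly as in Proposition \ref{Simplif1}: for two distinct index pairs one can choose an index $i_0$ lying in precisely one of them, and the factor $\fr{u}(n_{i_0})\subseteq \fr{n}_{\fr{g}}(\fr{h})$ acts trivially on one submodule but non-trivially on the other.

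Having pairwise inequivalent irreducible $\op{ad}(\fr{n}_{\fr{g}}(\fr{h}))$-submodules, the metric endomorphism must act diagonally: $\left.A\right|_{\fr{m}_{ij}}=\lambda_{ij}\op{Id}$ for every $0\leq i<j\leq s$. To equate the scalars, I would use the bracket relations from Lemma \ref{rel2}, which yield $[\fr{m}_{ij},\fr{m}_{jl}]\subseteq \fr{m}_{il}$ with non-zero projection on $\fr{m}_{il}$ for all $0\leq i<j<l\leq s$. Part \textbf{2} of Lemma \ref{EigenEq} then gives $\lambda_{ij}=\lambda_{jl}=\lambda_{il}$, and Lemma \ref{CombinatorialLemma1} provides the common value $\lambda$, establishing $\left.A\right|_{\fr{p}}=\lambda\op{Id}$.

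The main obstacle I expect is verifying the $\op{ad}(\fr{n}_{\fr{g}}(\fr{h}))$-irreducibility of $\fr{m}_{0j}$. Unlike the orthogonal case of Proposition \ref{Simplif1}, the complexified summand $(q_j\oplus \bar{q}_j)\otimes_{\mathbb{C}}\mathbbm{1}_{n_0}$ already carries two complex-conjugate pieces, so one has to be careful in tracking how the $\fr{u}(n_0)$-action interacts with the $n_0$ equivalent real copies $\fr{m}_\ell^j$ to rule out any finer $\op{ad}(\fr{n}_{\fr{g}}(\fr{h}))$-invariant decomposition. Once irreducibility and inequivalence are in place, the rest of the argument is a direct transcription of the proof of Proposition \ref{Simplif1}.
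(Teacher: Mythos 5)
Your proposal is correct and follows essentially the same route as the paper: identify $\fr{n}_{\fr{g}}(\fr{h})=\fr{n}\oplus\fr{h}$, invoke the normalizer lemma to pass to $G/N_G(H)$, show the $\fr{m}_{ij}$ ($0\le i<j\le s$) are $\op{ad}(\fr{n}_{\fr{g}}(\fr{h}))$-irreducible and pairwise inequivalent so $A$ is block-scalar, then equate scalars via $[\fr{m}_{ij},\fr{m}_{jl}]=\fr{m}_{il}$, part 2 of Lemma \ref{EigenEq}, and Lemma \ref{CombinatorialLemma1}. The only difference is that you spell out the irreducibility of $\fr{m}_{0j}$ under the enlarged algebra (the $\fr{u}(n_0)$-factor mixing the $n_0$ equivalent copies $\fr{m}_\ell^j$), which the paper leaves as an assertion; this is a welcome clarification and your worry is indeed resolved by the standard fact that a real module whose complexification is $U\oplus\bar U$ with $U\not\cong\bar U$ irreducible is real-irreducible of complex type, whence any $B$-symmetric equivariant endomorphism of it is scalar.
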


   We further decompose $\fr{n}=\fr{u}(1)\oplus \fr{su}(n_0)=\fr{z}(\fr{n})\oplus \fr{su}(n_0)$ into its simple and abelian ideals.  With respect to the basis $\mathcal{B}$, we have $\fr{z}(\fr{n})=\op{span}\{f_{11}+\cdots +f_{n_0n_0}\}$ and $\fr{su}(n_0)$ is the $B$-orthogonal complement of $\fr{z}(\fr{n})$ in $\fr{u}(n_0)$.

\begin{prop}\label{Simplif22}
Let $\fr{n}=\fr{u}(1)\oplus \fr{su}(n_0)=\fr{z}(\fr{n})\oplus \fr{su}(n_0)$ be the decomposition of $\fr{n}=\fr{u}(n_0)$ into its simple and abelian ideals. If $n_0=1$ then $\left.A\right|_{\fr{n}}=\left.\mu\op{Id}\right|_{\fr{n}}$ for some $\mu>0$. If $n_0\geq 2$, then $\left.A\right|_{\fr{n}}=\begin{pmatrix} \left.\mu\op{Id}\right|_{\fr{z}(\fr{n})} & 0\\
0 & \left.\lambda\op{Id}\right|_{\fr{su}(n_0)}\end{pmatrix}$, where $\lambda$ is given by {\rm Proposition \ref{Simplif12}}.
\end{prop}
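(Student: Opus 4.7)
\emph{Proof plan for Proposition \ref{Simplif22}.} The plan is to split the argument by the value of $n_0$ and then apply the structural tools already developed in Section \ref{Prop}.

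For the case $n_0=1$, the space $\fr{n}=\fr{u}(1)$ is one-dimensional, so any positive-definite endomorphism on it is automatically a positive multiple $\mu\op{Id}$ of the identity and there is nothing further to prove.

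For the case $n_0\geq 2$, I would first apply Lemma \ref{DualNormalizer}: since $(G/H,g)$ is a g.o. space and $\fr{n}$ is identified with the Lie algebra of the compact Lie group $N_G(H^0)/H^0$, the restriction $\left.A\right|_{\fr{n}}$ is the metric endomorphism of a bi-invariant metric on $N_G(H^0)/H^0$. The Lie algebra of this group is $\fr{u}(n_0)=\fr{z}(\fr{n})\oplus\fr{su}(n_0)$, where $\fr{su}(n_0)$ is simple for $n_0\geq 2$ and $\fr{z}(\fr{n})$ is its one-dimensional center. Lemma \ref{GOLieGroups} then forces $\left.A\right|_{\fr{n}}$ to be block-diagonal with respect to this decomposition, with $\left.A\right|_{\fr{z}(\fr{n})}=\mu\op{Id}$ and $\left.A\right|_{\fr{su}(n_0)}=\lambda'\op{Id}$ for some $\mu,\lambda'>0$.

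It remains to identify $\lambda'$ with the eigenvalue $\lambda$ of $\left.A\right|_{\fr{p}}$ supplied by Proposition \ref{Simplif12}. For this, I would exhibit a bracket that ``leaks out'' of $\fr{su}(n_0)\oplus\fr{m}_1^1$: take $X=e_{12}\in\fr{su}(n_0)$ and $Y=e_{1,n_0+1}\in\fr{m}_1^1\subset\fr{m}_{01}$, both of which are available since $n_0\geq 2$ and $n_1\geq 1$. A direct application of Lemma \ref{rel2} gives $[X,Y]=-e_{2,n_0+1}\in\fr{m}_2^1$, a vector $B$-orthogonal to both $\fr{su}(n_0)$ and $\fr{m}_1^1$. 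Observing that $\fr{su}(n_0)$ commutes with $\fr{h}$ (and hence is $\op{ad}(\fr{h})$-invariant) and that $\fr{m}_1^1$ is $\op{ad}(\fr{h})$-invariant by construction, part \textbf{1} of Lemma \ref{EigenEq} applied to the pair $(\fr{su}(n_0),\fr{m}_1^1)$ yields $\lambda'=\lambda$, which completes the proof.

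The main obstacle is the last step, namely producing explicit $\op{ad}(\fr{h})$-invariant submodules, one inside $\fr{su}(n_0)$ and one inside $\fr{p}$, whose bracket has a non-zero component outside their direct sum; everything else is a routine combination of Lemmas \ref{DualNormalizer} and \ref{GOLieGroups}. Notably — and in contrast to the $\SO(n)$ case of Proposition \ref{Simplif2} — no such bracket argument can pin down $\mu$ in terms of $\lambda$, because the one-dimensional center $\fr{z}(\fr{n})$ lies in the center of $\fr{n}_{\fr{g}}(\fr{h})$ and so its brackets with submodules of $\fr{p}$ cannot be controlled by Lemma \ref{EigenEq}; this is precisely the source of the one-parameter family $g_\mu$ of g.o. metrics in Theorem \ref{main2}.
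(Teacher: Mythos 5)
Your proof is correct and follows essentially the same route as the paper: Lemma \ref{DualNormalizer} plus Lemma \ref{GOLieGroups} give the block-diagonal form on $\fr{n}=\fr{z}(\fr{n})\oplus\fr{su}(n_0)$, and the identification $\lambda'=\lambda$ is obtained from exactly the same bracket $[e_{12},e_{1,n_0+1}]=-e_{2,n_0+1}\in\fr{m}^1_2$ together with part \textbf{1} of Lemma \ref{EigenEq}. Your closing remark on why $\mu$ cannot be forced to equal $\lambda$ is accurate and consistent with the one-parameter family $g_\mu$ in Theorem \ref{main2}.
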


%\smallskip
%\noindent \emph{Proof of Theorem \ref{main2}}.
 From Propositions \ref{Simplif12} and \ref{Simplif22}, the decomposition $\fr{m}=\fr{n}\oplus \fr{p}=\fr{u}(1)\oplus \fr{su}(n_0)\oplus \fr{p}$, and after normalizing the metric, we conclude that any g.o. metric on $G/H$ has necessarily the form (up to homothety)
\begin{equation}\label{fform}A=\begin{pmatrix} \left.\mu\op{Id}\right|_{\fr{z}(\fr{n})} & 0\\
0 & \left.\op{Id}\right|_{\fr{su}(n_0)\oplus \fr{p}}\end{pmatrix}.\end{equation}
 To conclude the proof of Theorem \ref{main2}, it remains to prove that the above form is also sufficient, i.e. the metrics $A$ are g.o. metrics.  Let $X\in \fr{m}$.  By Proposition \ref{GOCond}, we need to find a vector $a\in \fr{h}$ such that 
\begin{equation}\label{ggo}[a+X,AX]=0.\end{equation}
Let $X_{\fr{p}}$, $X_{\fr{su}(n_0)}$ and $X_{\fr{z}(\fr{n})}$ denote the projections of $X$ on $\fr{p}$, $\fr{su}(n_0)$ and $\fr{z}(\fr{n})$ respectively.  Then 
\begin{equation*}\label{a}X_{\fr{z}(\fr{n})}=r\sum_{i=1}^{n_0}{f_{ii}},\quad \mbox{for some} \ r\in \mathbb{R},\end{equation*}
 and $X_{\fr{p}}=X_1+X_2$, where $X_1$ is the projection of $X_{\fr{p}}$ on $\fr{m}_{01}\oplus \cdots \oplus \fr{m}_{0s}$ and $X_2$ is the projection of $X_{\fr{p}}$ on $\bigoplus_{1\leq i<j\leq s}\fr{m}_{ij}$.  Moreover, we write
\begin{equation*}X_1=\sum_{i=1}^{n_0}\sum_{j=n_0+1}^n{(a_{ij}e_{ij}+b_{ij}f_{ij})}, \quad a_{ij},b_{ij}\in \mathbb R.\end{equation*}
 We also consider the vector
\begin{equation*}\widetilde{X}_{1}:=\sum_{i=1}^{n_0}\sum_{j=n_0+1}^n{(b_{ij}e_{ij}-a_{ij}f_{ij})}.\end{equation*} 
 Finally, we choose the vector
\begin{equation*}a=r(1-\mu)\sum_{i=n_0+1}^n{f_{ii}} \in \fr{h}=\fr{u}(n_1)\oplus \cdots \oplus \fr{u}(n_s).\end{equation*} 
 By using Lemma \ref{rel2} it is straightforward to check the following relations:
\begin{equation}\label{sf} [X_{\fr{z}(\fr{n})},X_{1}]=-2r\widetilde{X}_{1}, \ \ [X_{\fr{z}(\fr{n})},X_2]=0, \quad [a,X_{1}]=2r(1-\mu)\widetilde{X}_{1}\ \ \makebox{and} \ \ [a,X_2]=0.\end{equation}
  More specifically, the last relation can be verified by viewing both vectors $a,X_2$ as elements of the Lie algebra $\fr{k}:=\fr{u}(n_1+\cdots +n_s)$, embedded diagonally in $\fr{g}$ as $\begin{pmatrix} 0_{n_0\times n_0} & 0\\ 0 & \fr{k}\end{pmatrix}$, and observing that $a$ lies in the center of $\fr{k}$.  
  Finally, since $[\fr{h},\fr{n}]=0$ and $[\fr{z}(\fr{n}),\fr{n}]=0$, we obtain  
\begin{equation}\label{sf1}[a,X_{\fr{z}(\fr{n})}]=[a,X_{\fr{su}(n_0)}]=[X_{\fr{z}(\fr{n})},X_{\fr{su}(n_0)}]=0.\end{equation}
 We can now  verify condition \eqref{ggo}. Indeed, by taking into account relations \eqref{sf} and \eqref{sf1}, as well as Equation \eqref{fform}, we obtain 
\begin{eqnarray*}[a+X,AX]&=&[a+X_{\fr{p}}+X_{\fr{su}(n_0)}+X_{\fr{z}(\fr{n})}, X_{\fr{p}}+X_{\fr{su}(n_0)}+\mu X_{\fr{z}(\fr{n})}]\\ \nonumber
&=&
[a,X_{\fr{p}}]+(1-\mu)[X_{\fr{z}(\fr{n})}, X_{\fr{p}}]\nonumber \\
&=&
[a,X_1+X_2]+(1-\mu)[X_{\fr{z}(\fr{n})}, X_1+X_2]\nonumber\\
&=&
[a,X_1]+(1-\mu)[X_{\fr{z}(\fr{n})}, X_1]\nonumber\\
&=&
2r(1-\mu)\widetilde{X}_{1}-2r(1-\mu)\widetilde{X}_{1}=0,\end{eqnarray*} 
and this concludes the proof of the theorem.\qed

\medskip
We now give the proofs of the above propositions.

\smallskip
\emph{Proof of Proposition \ref{Simplif12}}.  Since $H$ is connected, the Lie algebra of the normalizer $N_G(H)$ coincides with $\fr{n}_{\fr{g}}(\fr{h})=\{Y\in \fr{g}: [Y,\fr{h}]\subseteq \fr{h}\}$.  In our case, $\fr{h}=\fr{u}(n_1)\oplus \cdots \oplus \fr{u}(n_s)$ and 
\begin{equation*}\fr{n}_{\fr{g}}(\fr{h})=\fr{u}(n_0)\oplus \fr{u}(n_1)\oplus \cdots \oplus \fr{u}(n_s)=\fr{n}\oplus \fr{h}.
\end{equation*}
 Therefore, the tangent space of $G/N_G(H)$ coincides with $\fr{p}$.  The normalizer Lemma \ref{NormalizerLemma} then implies that $\left.A\right|_{\fr{p}}$ defines a $G$-invariant g.o. metric on $G/N_G(H)$. 

 Similarly to the space $\SO(n)/\SO(n_1)\times \cdots \times \SO(n_s)$, by taking into account Lemma \ref{rel2} and the expressions of the subspaces $\fr{m}_{ij}, \fr{u}(n_j), \fr{u}(n_0)$ in terms of the basis $\mathcal{B}$, we deduce that the submodules $\fr{m}_{ij}$, $0\leq i<j\leq s$ are $\op{ad}(\fr{n}_{\fr{g}}(\fr{h}))$-invariant, $\op{ad}(\fr{n}_{\fr{g}}(\fr{h}))$-irreducible and pairwise inequivalent.  Therefore, 
 \begin{equation}\label{9gel} \left.A\right|_{\fr{m}_{ij}}=\lambda_{ij}\op{Id}.\end{equation}
 It remains to prove that all $\lambda_{ij}$ are equal.  To this end, we will use similar arguments as in \textbf{Step 2} in Proposition \ref{Simplif1}.  More specifically, Lemma \ref{rel2} yields 
\begin{equation*}\label{SubmoduleBrackets1}
[\fr{m}_{ij},\fr{m}_{jl}]=\fr{m}_{il} \ \ \makebox{for all} \ \ 0\leq i<j<l\leq s.
\end{equation*}
Using the above relation, along with Equation \eqref{9gel}, the $\op{ad}({\fr{h}})$-invariance of $\fr{m}_{ij},\fr{m}_{jl}$ and their $B$-orthogonality, part \textbf{2.} of Lemma \ref{EigenEq} yields that 
\begin{equation*}\label{EqEigen1}\lambda_{ij}=\lambda_{jl}=\lambda_{il} \ \ \makebox{for all} \ \ 0\leq i<j< l\leq s.\end{equation*}
 By Lemma \ref{CombinatorialLemma1}, we deduce that the set $R_s=\{\lambda_{ij}: \ 0\leq i<j\leq s\}$ of the eigenvalues of $\left.A\right|_{\fr{p}}$ has only one element, therefore $\left.A\right|_{\fr{p}}=\lambda\op{Id}$.\qed 

\medskip
\noindent \emph{Proof of Proposition \ref{Simplif22}}.  The Lie algebra $\fr{n}=\fr{u}(n_0)$ coincides with the Lie algebra of $N_G(H)/H$.  By Lemma \ref{DualNormalizer}, $\left.A\right|_{\fr{n}}$ defines a bi-invariant (and hence g.o.) metric on $U(n_0)$, which in turn corresponds to an $\op{Ad}$-invariant inner product on $\fr{u}(n_0)$.  Since the center $\fr{z}(\fr{n})$ of $\fr{n}$ is one-dimensional, Lemma \ref{GOLieGroups} yields 

\begin{equation*}\label{niad}\left.A\right|_{\fr{n}}=\begin{pmatrix} \left.\mu\op{Id}\right|_{\fr{z}(\fr{n})} & 0\\
0 & \left.\widetilde{\lambda}\op{Id}\right|_{\fr{su}(n_0)}\end{pmatrix}, \ \ \widetilde{\lambda}>0.\end{equation*}

If $n_0=1$, then $\fr{n}=\fr{u}(1)=\fr{z}(\fr{n})$, verifying Proposition \ref{Simplif22} for this case.  Assume that $n_0\geq 2$. It remains to show that $\widetilde{\lambda}$ is equal to the eigenvalue $\lambda$ given in Proposition \ref{Simplif12}. We recall the spaces $\fr{m}_{0j}=\fr{m}^j_1\oplus \cdots \oplus \fr{m}^j_{n_0}$ defined in Section \ref{isotropy2}.  By Proposition \ref{Simplif12}, we have
\begin{equation*}\label{Res22}\left.A\right|_{\fr{m}^j_i}=\lambda\op{Id}.\end{equation*}
 Choose the vectors $e_{12}\in \fr{su}(n_0)$ and $e_{1,n_0+1}\in \fr{m}^1_1$.  By Lemma \ref{rel2}, we have 
\begin{equation*}[e_{12},e_{1,n_0+1}]=-e_{2,n_0+1}\in\fr{m}^1_2.\end{equation*}
 Therefore, $[\fr{su}(n_0),\fr{m}_1^1]$ has non-zero projection on $(\fr{su}(n_0)\oplus \fr{m}_1^1)^{\bot}$.  Along with the $\op{ad}(\fr{h})$-invariance of $\fr{su}(n_0)$ and $\fr{m}_1^1$ and the facts that $\left.A\right|_{\fr{m}^j_i}=\lambda\op{Id}$ and $\left.A\right|_{\fr{su}(n_0)}=\widetilde{\lambda}\op{Id}$, Lemma \ref{EigenEq} yields $\lambda=\widetilde{\lambda}$. \qed \\

\end{document}